\newtheorem*{putinar}{Archimedean Property}{\bf}{\it}
\newcommand*{\bfrac}[2]{\genfrac{(}{)}{0pt}{}{#1}{#2}}
\newcommand{\dsum}{\displaystyle\sum}
\newcommand{\dmin}{\displaystyle\min}
\newcommand{\dmax}{\displaystyle\max}
\def\K{\mathbf{K}}
\def\R{\mathbb{R}}
\def\N{\mathbb{N}}
\def\Q1{\mathbf{Q1}}
\def\B{\mathcal{B}}
\def\mrf{\mathbf{MOMRF}}
\def\Mo{\mathrm{M}}
\def\L{\mathrm{L}}
\def\y{\mathbf{y}}
\def\ll{\ell}
\newcommand{\fin}{\par \hfill
 $\square$  \par \bigskip}
\def\sym{\mathcal{S}}
\def\R{\mathbb{R}}
\def\B{\mathcal{B}}
\def\mrf{\mathbf{MOMRF}}
\def\Mo{\mathrm{M}}
\def\L{\mathrm{L}}
\def\N{\mathbb{N}}
\def\C{\mathbb{C}}
\def\e{\mathrm{e}}
\def\Mo{\mathrm{M}}
\newtheorem{theorem}{Theorem}
\newtheorem{ex}[theorem]{Example}
\newtheorem{cor}[theorem]{Corollary}
\newtheorem{prop}[theorem]{Proposition}
\newtheorem{remark}[theorem]{Remark}
\newtheorem{lemma}[theorem]{Lemma}
\def\Q{\mathbf{Q}}
\def\y{\mathbf{y}}
\def\ll{\ell}
\title[Algorithms for MF OMLP]{Algorithms and dimensionality reductions for continuous multifacility ordered median location problems}
\author[V. Blanco]{V\'ictor Blanco}
\address{Dpt. of Quantitative  Methods for Economics \& Business, Universidad de Granada}
\author[J. Puerto \and S. ElHaj-BenAli]{Justo Puerto \and Safae El-Haj Ben-Ali}
\address{Dpt. of Statistics and OR, Universidad de Sevilla}
\email{vblanco@ugr.es, puerto@us.es, anasafae@gmail.com}
\date{}
\begin{document}

\maketitle

\begin{abstract}
In this paper we propose a general methodology for solving a broad class of continuous, multifacility location problems, in any dimension and with $\ell_\tau$-norms proposing two different methodologies: 1) by a new second order cone mixed integer programming formulation and 2)  by formulating a sequence of semidefinite programs that converges to the solution of the problem; each of these relaxed problems solvable with  SDP solvers in polynomial time. We apply dimensionality reductions of the problems by sparsity and symmetry  in order to be able to solve larger problems.
 \keywords{Continuous multifacility location \and Ordered median problems \and Semidefinite programming \and Moment problem.}
\end{abstract}

\section{Introduction}

Multifacility location problems are among the most interesting and difficult problems in Location Analysis. It is well-known that even in their discrete version  the $p$-median and $p$-center problems are already NP-hard (see Kariv and Hakimi \cite{Kariv79a}.) A lot of attention has been paid in the last decades to these classes of problems, namely location-allocation problems, since they are easy to describe and to understand and they still  capture the essence of difficult problems in combinatorial optimization. A comprehensive overview over existing models and their applications is given in \cite{drezner02} and the references therein.
\medskip

On the other hand, also in the last two decades locators have devoted  much effort to solve continuous location problems that fall within the general class of global optimization, i.e.  convexity properties are lost.  Given a set of demand points (existing facilities) the goal is to locate several facilities to provide service to the existing ones  (demand points) minimizing some globalizing function of the travel distances. Assuming that each demand point will be served by its closest facility we are faced with another location-allocation problem but now the new facilities can be located anywhere in the framework space and therefore they are not confined to be in an ''a priori'' given set of locations. Obviously, these problems are much harder than the discrete ones and not much has been obtained regarding algorithms, and general convergence results, although some exceptions can be found in the literature \cite{BHMT00} and the references therein.
\medskip

Since the nineties a new family of objective functions has started to be considered in the area of Location Analysis: the ordered median problem \cite{NP05}. Ordered median problems represent as special cases nearly all classical objective functions in location theory, including the Median, CentDian, Center and $k$-Centra. Hence, handling the most important objective functions in location analysis is possible with one unique model and also new ones may be created by adapting adequately the parameters. More precisely, the $p$-facility ordered median problem  can be formulated as follows: A vector of weights $(\lambda_1,\ldots,\lambda_n)$ is given. The problem is to find  locations for the facilities that minimize the weighted sum of distances to the facilities where the distance to the closest point to its allocated facility is multiplied by the weight $\lambda_n$, the distance to the second closest, by $\lambda_{n-1}$, and so on. The distance to the farthest point is multiplied by $\lambda_1$.
As mentioned above, many location problems can be formulated as the ordered 1-median problem by selecting appropriate weights. For example, the vector for which all $\lambda_i= 1$ is the  $p$-median problem, the problem where $\lambda_1= 1$ and all others are equal to zero is the $p$-center problem, the problem where $\lambda_1=\ldots=\lambda_k=1$ and all others are equal to zero is the $p$-$k$-centrum. Minimizing the range of distances is achieved by $\lambda_1=1$, $\lambda_n= -1$ and all others are zero. Lots of results have been obtained for these problems in discrete settings, on networks and even in the continuous single facility case (see the book \cite{NP05}, \cite{BDNP06}, \cite{MSPV09} and the recent paper \cite{BPS12} ). However, very little is known in the continuous multifacility counterpart.
\medskip

In this paper, we address the multifacility continuous ordered median problem in finite dimension $d$ and for general $\ell_\tau$-norm for measuring the distances between points. We show how these problems can be cast within a  general family of polynomial optimization problems. Then, we show how these problems can be formulated as  second order cone mixed integer programs or, using tools borrowed from the Theory of Moments \cite{lasserrebook}, they can be solve (approximated up to any degree of accuracy) by a series of relaxed problems each one of them is a simple SDP that can be solved in polynomial time. We present preliminary computational results and show how the sizes and accuracy of the results can be improved by exploiting some specific characteristic of these models, namely sparsity in the representing variables and symmetry \cite{sparse,lasserre-sym,waki}.

\section{Preliminaries}
\label{sec:prelim}

In this section we recall the main definitions and results  on Semidefinite Programming and the Theory of Moments that will be useful for the development through this paper. We use standard notation in those fields (see e.g. \cite{lasserrebook,bookSDP}).

Semidefinite programming (SDP) is relatively a new subfield of convex optimization and probably one of the most exciting development in mathematical programming, it is a particular case of conic programming when one considers the convex cone of positive semidefinite matrices, whereas linear programming considers the positive orthant, a polyhedral convex cone. SDP theoretically includes a large number of convex programming such as convex quadratic programming (QP), or second-order cone programming (SOCP). After polynomial time interior point methods for linear optimization were extended to solve SDP problems it has seen a great growth during the 1990s. The handbook \cite{bookSDP} provides an excellent coverage of SDP as well as an extensive bibliography covering the literature up to year 2000.

Let $\mathbf{S}^n$ be the space of real $n\times n$ symmetric matrices. Whenever $P,\;Q\;\in \mathbf{S}^n$, the notation $P\succeq Q$ (resp. $P \succ Q$) stands for $P-Q$ positive semidefinite (resp. positive definite). Also, the notation $\langle P,Q\rangle$ stands for ${\rm trace}(PQ)$.

In canonical form, a primal semidefinite program reads:

\begin{equation*} \tag{SDP-P}
\left\{
\begin{array}{lcc}
\min &\langle C,X \rangle\\
s.t.  &\langle A_i,X \rangle=b_i,\; \text{ for }i=1,\ldots, m, \\
      &X\succeq 0.\\
\end{array}
\right.
\end{equation*}

where $C,\;A_i\in\mathbf{S}^n$ for $i=1, \ldots, m,$ and $b \in \R^m$. Its dual can be defined to be

\begin{equation*} \tag{SDP-D}
\left\{
\begin{array}{lcc}
\min & b^{t} y\\
s.t.  &\dsum_{i=1}^{m} y_{i}A_{i}-C\succeq0,\\
      &y\in\R^m.
\end{array}
\right.
\end{equation*}

SDP duality is not always strong because of the nonlinear positive semidefinite constraint. To avoid duality gaps, we can require the problem and its dual to satisfy some qualification constraint.  The purpose of a constraint qualification is to ensure the existence of Lagrange multipliers at optimality in nonlinear problems. These multipliers are an optimal solution for the dual problem, and thus the constraint qualification ensures that strong duality holds: it is possible to achieve primal and dual feasibility with no duality gap. One common choice of constraint qualification is Slater's constraint qualification \cite{BV04}. It is usually easy to verify that it holds for an SDP problem; indeed, it suffices to exhibit an  interior point for the nonlinear domain of the problem.

Next, we describe the basic elements of the Theory of Moments to be used to approximate hard global optimization problems \cite{lasserrebook}.
We denote by $\R[x]$ the ring of real polynomials in the variables $x=(x_1,\ldots,x_d)$,  for $d \in \N$ ($d \geq 1$), and by $\R[x]_r \subset \R[x]$ the space of polynomials of degree at most $r \in \N$ (here $\N$ denotes the set of non-negative integers). We also denote by $\B = \{x^\alpha: \alpha\in\N^d\}$ a canonical basis of monomials for $\R[x]$, where $x^\alpha = x_1^{\alpha_1} \cdots x_d^{\alpha_d}$, for any $\alpha \in \N^d$.  Note that $\mathcal{B}_r = \{x^\alpha \in \mathcal{B}: \dsum_{i=1}^d \alpha_i \leq r\}$ is a basis for $\R[x]_r$.

For any sequence indexed in the canonical monomial basis $\B$, $\mathbf{y}=(y_\alpha)_{\alpha \in \N^d}\subset\R$, let $\L_\mathbf{y}:\R[x]\rightarrow\R$ be the linear functional defined, for any $f=\dsum_{\alpha\in\N^d}f_\alpha\,x^\alpha \in \R[x]$, as $\L_\mathbf{y}(f) :=
\dsum_{\alpha\in\N^d}f_\alpha\,y_\alpha$.

The \textit{moment} matrix $\Mo_r(\mathbf{y})$ of order $r$ associated with $\mathbf{y}$, has its rows and columns indexed by  the elements in the basis $\mathcal{B}=\{x^\alpha: \alpha \in \N^d\}$ and for two elements in such a basis, $b_1=x^\alpha, b_2=x^\beta$, $\Mo_r(\mathbf{y})(b_1,b_2) = \Mo_r(\mathbf{y})(\alpha,\beta)\,:=\,\L_\mathbf{y}(x^{\alpha+\beta})\,=\,y_{\alpha+\beta}$, for $\vert\alpha\vert,\,\vert\beta\vert\,\leq r$ (here $|a|$ stands for the sum of the coordinates of $a \in \N^d$). Note that the moment matrix of order $r$ has dimension $\bfrac{d+r}{d}\times\bfrac{d+r}{d}$ and that there are $\bfrac{d+2r}{d}$ $\mathbf{y}_\alpha$ variables.

For $g\in \R[x] \,(=\dsum_{\gamma \in \N^d}  g_{\gamma} x^\gamma$), the \textit{localizing} matrix $\Mo_r(g \mathbf{y})$ of order $r$ associated with $\mathbf{y}$ and $g$, has its rows and columns indexed by the elements in $\mathcal{B}$ and for $b_1=x^\alpha$, $b_2=x^\beta$,
$\Mo_r(g\mathbf{y})(b_1, b_2) = \Mo_r(g\mathbf{y})(\alpha,\beta):=\L_\mathbf{y}(x^{\alpha+\beta}g(x))=\dsum_{\gamma}g_\gamma y_{\gamma+\alpha+\beta}$, for $\vert\alpha\vert,\vert\beta\vert\,\leq r$.

Observe that a different choice for the basis of $\R[x]$, instead of the standard monomial basis, would give different moment and localizing matrices, although the results would be also valid.

The main assumption to be imposed when one wants to assure convergence of some SDP relaxations for solving polynomial optimization problems (see for instance \cite{lasserre22,lasserrebook}) is a consequence  of Putinar's results \cite{putinar} and it is stated as follows.
\begin{putinar}
\label{defput}
Let $\{g_{1}, \ldots, g_m\} \subset \mathbb{R}[x]$ and $\mathbf{K}:=\{x\in \mathbb{R}^{d}: g_{j}(x)\geq
0,\: j=1,\ldots ,m \}$ a basic
closed semi-algebraic set. Then, $\mathbf{K}$ satisfies Archimedean property if there exists $u\in
\mathbb{R}[x]$ such that:
\begin{enumerate}
\item $\{x: u(x)\geq 0\} \subset \R^d$ is compact, and
\item\label{putrep} $u\,=\,\sigma _{0}+\dsum_{j=1}^{m}\sigma _{j}\,g_{j}$, for some  $\sigma_1, \ldots, \sigma_m \in \Sigma \lbrack x]$. (This expression is usually called a \textbf{Putinar's representation} of $u$ over $\mathbf{K}$).
\end{enumerate}
Being $\Sigma[x]\subset\mathbb{R}[x]$  the subset of polynomials that are sums of squares.
\end{putinar}
Note that Archimedean property is equivalent to impose that the quadratic polynomial $u(x)=M-\dsum_{i=1}^d x_i^2$ has a Putinar's representation over $\mathbf{K}$  for some $M>0$.

We observe that Archimedean property implies compactness of $\mathbf{K}$. It is easy to see that Archimedean property holds if either $\{x:g_{j}(x)\geq 0\}$ is compact for some $j$, or all $g_{j}$ are affine and $\mathbf{K}$ is compact. Furthermore, Archimedean property is not restrictive at all, since any semi-algebraic  set $\mathbf{K}\subseteq \R^d$ for which is known that $\dsum_{i=1}^d x_i^2 \leq M$ holds for some $M>0$ and for all $x\in \mathbf{K}$, admits a new representation $\mathbf{K'} = \mathbf{K} \cup \{x\in \mathbb{R}^{d}: g_{m+1}(x):=M-\dsum_{i=1}^d x_i^2\geq 0\}$ that verifies  Archimedean property  (see Section 2 in \cite{lasserrebook}).

The importance of Archimedean property stems from  the link between such a condition with the semidefiniteness of the moment and localizing matrices  (see \cite{putinar} ). The use of this property for the particular problems that we deal with through this paper will be given in the next sections. A detailed presentation and an account of its implications can be found in \cite{lasserre1}.

\begin{theorem}[Putinar \protect\cite{putinar}]
\label{thput} Let $\{g_{1}, \ldots, g_m\} \subset \mathbb{R}[x]$ and $\mathbf{K}:=\{x\in \mathbb{R}^{d}: g_{j}(x)\geq
0,\: j=1,\ldots ,m \}$ satisfying Archimedean property. Then:
\begin{enumerate}
\item Any $f\in \mathbb{R}[x]$ which is strictly positive on $\mathbf{K}$ has a Putinar's representation over $\mathbf{K}$.
\item $\mathbf{y}=(y_{\alpha })$ has a representing measure on $\mathbf{K}$ if and only if $\Mo_{r}(\mathbf{y})\succeq 0$, and $\Mo_{r}(g_{j}\mathbf{y})\succeq 0$, for all $j=1,\ldots ,m$ and  $r\in \N$.
\end{enumerate}
\end{theorem}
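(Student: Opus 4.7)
The plan is to handle the two parts in order, with part (1) providing the main engine for the sufficiency direction of part (2).

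For part (1), I would follow Putinar's original line of argument. Consider the quadratic module $\mathcal{Q}(g_1,\ldots,g_m) := \{\sigma_0 + \sum_{j=1}^m \sigma_j g_j : \sigma_j \in \Sigma[x]\}$. The Archimedean hypothesis (equivalently, $N - \sum_{i=1}^d x_i^2 \in \mathcal{Q}$ for some $N>0$) makes the module Archimedean in the algebraic sense. Argue by contradiction: suppose $f$ is strictly positive on $\K$ but $f \notin \mathcal{Q}$. Since $\mathcal{Q}$ is a convex cone in $\R[x]$, Hahn-Banach separation yields a linear functional $\L : \R[x] \to \R$ with $\L(1) > 0$, $\L \geq 0$ on $\mathcal{Q}$, but $\L(f) \leq 0$. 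The Archimedean hypothesis then forces $\L$ to be continuous in the order-unit seminorm induced by $\mathcal{Q}$, so by a standard GNS/Riesz-type construction $\L$ is represented by a positive Borel measure $\mu$ on $\R^d$; the conditions $\L(\sigma g_j) \geq 0$ for every $\sigma \in \Sigma[x]$ confine $\mathrm{supp}(\mu) \subseteq \K$. But then $\L(f) = \int_{\K} f \, d\mu > 0$, contradicting $\L(f) \leq 0$.

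For part (2), the ``only if'' direction is immediate: if $\y$ is the moment sequence of a measure $\mu$ on $\K$, then for any polynomial $h$ we have $\L_\y(h^2) = \int h^2 \, d\mu \geq 0$ and $\L_\y(h^2 g_j) = \int h^2 g_j \, d\mu \geq 0$ (since $g_j \geq 0$ on $\K$), which are precisely the semidefiniteness conditions on $\Mo_r(\y)$ and $\Mo_r(g_j \y)$ for every $r \in \N$. For the ``if'' direction, assume all moment and localizing matrices are PSD. By Haviland's theorem, producing a representing measure on $\K$ reduces to showing $\L_\y(p) \geq 0$ for every $p \in \R[x]$ with $p \geq 0$ on $\K$. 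Given such a $p$ and any $\epsilon > 0$, the polynomial $p+\epsilon$ is strictly positive on $\K$, so by part (1) it admits a Putinar representation $p+\epsilon = \sigma_0 + \sum_{j=1}^m \sigma_j g_j$ with $\sigma_j \in \Sigma[x]$. Applying $\L_\y$ (which is nonnegative on every SOS and on every $\sigma g_j$ by the PSD hypotheses on $\Mo_r(\y)$ and $\Mo_r(g_j\y)$) gives $\L_\y(p) + \epsilon\, y_0 \geq 0$; letting $\epsilon \to 0$ yields $\L_\y(p) \geq 0$ and Haviland's theorem delivers the desired representing measure.

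The main obstacle is the functional-analytic step inside part (1): showing that a positive linear functional on an Archimedean quadratic module is automatically represented by integration against a measure whose support lies in $\K$. This is the technical core of Putinar's theorem and relies either on a Gelfand/character-space argument (using the Archimedean property to ensure a compact spectrum) or on the Riesz extension theorem for ordered vector spaces. I would quote this step from \cite{putinar} or the textbook treatment in \cite{lasserrebook} rather than reconstruct it in full, since all subsequent uses in this paper rely only on the statement, not the internals.
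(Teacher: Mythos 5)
The paper states this result as a quoted classical theorem and gives no proof of its own, simply citing Putinar \cite{putinar} (with further pointers to \cite{lasserre1,lasserrebook}); your sketch reproduces the standard argument behind that citation — Hahn--Banach separation plus the Archimedean order-unit structure for part (1), and Haviland's theorem combined with part (1) and an $\epsilon$-perturbation for the nontrivial direction of part (2) — and correctly defers the functional-analytic core to the same sources. This is consistent with how the paper treats the statement, and the outline contains no substantive error.
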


\begin{prop}[Lasserre \cite{lasserre1}] \label{pr:lasserreconv}
Let $\mathbf{K}:=\{x\in \mathbb{R}^{d}: g_{j}(x)\geq
0,j=1,\ldots ,\ll \}\subset \mathbb{R}^{d}$ satisfy the Archimedean Property and let $%
p\in \R[X]$ be a polynomial.
Let $r \geq r_0:=\max  \{ \lceil \deg \frac{p}{2} \rceil, \lceil \deg \frac{g_1}{2}
\rceil, \ldots, \lceil \deg \frac{g_l}{2} \rceil\}$,
and consider the hierarchy of semidefinite relaxations
\begin{equation}
  Q_r:\quad\begin{array}{rcl}
   \multicolumn{3}{l}{\inf_{y} \L_\mathbf{y}(p)} \\
  \Mo_r(\mathbf{y}) & \succeq & 0 \, , \\
  \Mo_{r - \lceil \deg g_j / 2 \rceil}(g_j \, \mathbf{y}) & \succeq & 0 \, , \quad
1 \le j \le \ll \,\\
 \L_\mathbf{y}(y_0)=1
  \end{array}
\end{equation}
with optimal value denoted by $\inf Q_r$.

Then, the hierarchy of SDP-relaxations $\{(Q_r)_{r \ge r_0}\}$ is monotone non-decreasing and converges to $\rho^*:=\min_{x\in K} p(x)$.
\end{prop}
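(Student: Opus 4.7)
The plan is to split the proposition into two parts: the easy upper bound plus monotonicity, and the harder matching lower bound in the limit, where Putinar's theorem (Theorem~\ref{thput}) does the real work.

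For monotonicity, I would observe that any feasible sequence $\y$ for $Q_{r+1}$ remains feasible for $Q_r$: the moment matrix $\Mo_r(\y)$ and each localizing matrix $\Mo_{r - \lceil \deg g_j/2 \rceil}(g_j \y)$ appear as principal submatrices of their $(r+1)$-counterparts, so the PSD property is inherited. Since $\L_\y(p)$ involves only entries $y_\alpha$ with $|\alpha| \leq \deg p \leq 2 r_0 \leq 2r$, the objective is unchanged under this restriction, so $\inf Q_r \leq \inf Q_{r+1}$. For the bound $\inf Q_r \leq \rho^*$, I would use the truncated moment sequence of a Dirac measure: if $x^* \in \K$ attains the minimum, set $y_\alpha = (x^*)^\alpha$. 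Then $\Mo_r(\y) = v v^T$ with $v = ((x^*)^\alpha)_{|\alpha| \leq r}$ is PSD, each $\Mo_{r - \lceil \deg g_j/2\rceil}(g_j \y) = g_j(x^*)\, w w^T$ is PSD (using $g_j(x^*) \geq 0$), $y_0 = 1$, and $\L_\y(p) = p(x^*) = \rho^*$.

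The main obstacle, and the core of the argument, is $\liminf_r \inf Q_r \geq \rho^*$. Fix $\epsilon > 0$. The polynomial $p - \rho^* + \epsilon$ is strictly positive on $\K$, so by Theorem~\ref{thput}(1) it admits a Putinar representation
\[
p - \rho^* + \epsilon \;=\; \sigma_0 + \dsum_{j=1}^{\ll} \sigma_j\, g_j,
\]
with $\sigma_0, \sigma_1, \ldots, \sigma_\ll \in \Sigma[x]$. I would choose $r$ larger than $r_0$ and larger than all $\lceil \deg(\sigma_j g_j)/2 \rceil$ occurring in this representation. For any $\y$ feasible in $Q_r$, applying the linear functional $\L_\y$ to both sides and using $\L_\y(1)=y_0=1$ gives
\[
\L_\y(p) - (\rho^* - \epsilon) \;=\; \L_\y(\sigma_0) + \dsum_{j=1}^{\ll} \L_\y(\sigma_j g_j).
\]
The key calculation is that each term on the right is non-negative: decomposing $\sigma_j = \sum_k q_{jk}^2$ as a sum of squares, one has $\L_\y(q_{jk}^2 g_j) = \vec{q}_{jk}^{\,T}\, \Mo_{r - \lceil \deg g_j /2\rceil}(g_j \y)\, \vec{q}_{jk} \geq 0$ by the localizing-matrix constraint, and similarly $\L_\y(\sigma_0) \geq 0$ via the moment-matrix constraint. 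This yields $\L_\y(p) \geq \rho^* - \epsilon$, hence $\inf Q_r \geq \rho^* - \epsilon$ for all sufficiently large $r$.

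The delicate point is precisely this last step: the Putinar representation depends on $\epsilon$, and the degrees of the multipliers $\sigma_j$ typically blow up as $\epsilon \to 0$, so one cannot choose a single $r$ that works uniformly. This is exactly why the statement is asymptotic, $\lim_{r \to \infty} \inf Q_r = \rho^*$, rather than finite convergence at some fixed relaxation order. Combining this lower bound with the previously established $\inf Q_r \leq \rho^*$ and the monotonicity of the sequence completes the proof.
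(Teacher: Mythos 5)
Your proof is correct: the paper does not prove this proposition at all (it is quoted from Lasserre \cite{lasserre1} as a known result), and your argument is the standard one behind it --- truncation/principal-submatrix monotonicity, feasibility of the moment sequence of a Dirac measure at a minimizer for $\inf Q_r \leq \rho^*$, and Putinar's representation of $p-\rho^*+\epsilon$ combined with the identity $\L_\y(q^2 g_j)=\vec{q}^{\,T}\Mo_{r-\lceil \deg g_j/2\rceil}(g_j\y)\vec{q}\geq 0$ for the asymptotic lower bound. The only point to state explicitly is the degree bookkeeping: you need $r$ at least $\lceil \deg \sigma_j/2\rceil+\lceil \deg g_j/2\rceil$ for each multiplier so that the vectors $\vec{q}_{jk}$ fit inside the localizing matrices, and (as you correctly note) this $r$ depends on $\epsilon$, which is exactly why the convergence is only asymptotic.
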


\section{The multiple allocation multifacility ordered median location problem  \label{s:mfac-conv}}

This section deals with  multifacility location models where more than one new
facility have to be located to improve the service for the demand
points.  Several results obtained in previous papers are
extended or reformulated with great generality giving a panorama
view of the geometric insights of  location theory.

In this section we start by considering some multifacility  ordered median problems already introduced in \cite{NP05} and \cite{RNPF00}.  We shall extend these models, originally considered only in dimension $2$ and with polyhedral norms to the more general case of dimension $d$ and any $\ell_{\tau}$-norm being $\tau \in \mathbb{Q}, \; \tau\ge1$ (here $\ell_\tau$ stands for the norm $\|x\|_{\tau} = \left(\sum_{i=1}^d |x_i|^\tau\right)^{\frac{1}{\tau}}$, for all $x\in \R^d$). Unlike the original approaches in \cite{NP05,RNPF00} where even for polyhedral norms there are proposed iterative algorithms for which polynomiality results can not be proven we shall follow on a different approach.
In this section, we provide efficient reformulations of these classes of multifacility continuous location models and apply tools borrowed from conic programming to prove that these problems can be polynomially solved in the above mentioned cases, namely in dimension $d$ and under polyhedral or $\ell_{\tau}$-norm to measure distances.

We are given a set of demand points $\{a_{1},  . . .  , a_{n}\}$ and three sets of scalars $\{\omega_{1},  . . .  , \omega_{n}\}$, $\omega_i\ge 0,\; \forall\; i\in \{1, \ldots, n\} $, $\{\lambda_{1} , . . . , \lambda_{n}\}$ where $\lambda_{1}\geq . . . \geq\lambda_{n}\ge 0$ and $\{\mu_{12}, \mu_{13}, \ldots, \mu_{p-1 p}\}$ with $\mu_{jj'}\ge 0$ for $j, j' \in \{1, \ldots, p\}$ and $j'>j$.

The elements $\omega_{i}$ are weights corresponding to the importance given to the existing facilities $a_{i}, i \in \{1,  . . .  ,n\}$ and depending
on the choice of the $\lambda$-weights we get different classes of problems.   The $\mu$-weights represent the penalty per distance unit given when locating two different facilities. We denote by $\mathcal{P}_n$ the set of permutations of the first $n$ natural numbers.

Natural extensions of the multifacility models considered in \cite{NP05,RNPF00} assume that one is looking for the location of $p$ new facilities rather than only one. In this formulation the new facilities are chosen to provide service to all the existing facilities minimizing an ordered objective
function. These ordered problems are of course harder to handle than the classical ones not considering ordered distances. To simplify the presentation we consider that the different demand points use the same norm to measure distances, although all our results extend further to the case of mixed norms.

Let us consider a set of demand points $\{ a_1, a_2,\ldots, a_n\}\subset \R^d $. We want to locate $p$ new facilities $X=\{x_1,x_2, \ldots,
x_p\}$ which minimize the following expression:
\begin{equation}
\label{FI} f_{\lambda}^{NI}(x_1,x_2, \ldots, x_p)= \dsum_{i=1}^n
\dsum_{j=1}^p \lambda_{ij} d_{(i)}(x_j) + \dsum_{j=1}^{p-1} \dsum_{j'=j+1}^p
\mu_{jj'} \|x_j-x_{j'} \|_{\tau},
\end{equation}
where for any $x\in \R^d$, $d_i(x)=\|a_i-x\|_{\tau}$ and $d_{(i)}(x)$ is the $i$-th element in the permutation of $(d_1(x),\ldots,d_n(x))$ such that $ d_{(1)}(x)\ge d_{(2)}(x)\ge \stackrel{\underbrace{i}}{\ldots} \ge {d_{(i)}(x)}\ge\ldots \ge  d_{(n)}(x)$. In this model, it is assumed that (see \cite{RNPF00})
\begin{equation}\label{lambdas}
\begin{array}{c}
 \lambda_{11} \ge \lambda_{21} \ge \ldots \ge \lambda_{n1}\ge 0 \\
 \lambda_{12} \ge \lambda_{22} \ge  \ldots \ge \lambda_{n2}\ge 0 \\
\ldots  \\
 \lambda_{1p} \ge \lambda_{2p} \ge \ldots \ge \lambda_{np}\ge 0;
\end{array}
\end{equation}
$\mu_{jj'}\ge 0$ for any $j,j'=1,\ldots,p$ and, as mention above,
$d_{(i)}(x_j)$ is the expression, which appears at the $i$-th
position in the ordered version of the list
\begin{equation}
\label{etim} L^{NI}_j := (w_1 \|x_j- a_1\|_{\tau}, \ldots, w_n
\|x_j - a_n\|_{\tau}) \quad \mbox{ for } j=1,2,\ldots,p.
\end{equation}
Note that in this formulation we assign the lambda parameters with
respect to each new facility, i.e., $x_j$ is considered to be
non-interchangeable with $x_i$ whenever  $i \neq j$. For this
reason we say that this model has non-interchangeable facilities.

The problem consists of:
\begin{equation*} \label{pro:Multi1}\tag{$\mathbf{LOCOMF-NI}$}
\rho_\lambda^{NI} \,:=\displaystyle\min_{x} \{f_{\lambda}^{NI}(x): x=(x_1,\ldots,x_p), \; x_j\in \R^d, \, \forall j=1,\dots,p\},
\end{equation*}

The reader should observe that  this is the extension of Problem  (3) in \cite[Section 4.1]{RNPF00}.
\begin{theorem}
The problem \ref{pro:Multi1} admits at least one solution.
\end{theorem}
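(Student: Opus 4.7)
The plan is to apply the Weierstrass extreme value theorem to a compact sublevel set of $f_{\lambda}^{NI}$.

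I would first argue that $f_{\lambda}^{NI}$ is continuous on $(\R^d)^p$. The maps $x\mapsto\|x-a_i\|_{\tau}$ and $(x_j,x_{j'})\mapsto\|x_j-x_{j'}\|_{\tau}$ are continuous, and each ordered statistic admits the representation
\begin{equation*}
d_{(i)}(x_j) \;=\; \min_{\substack{I\subseteq\{1,\dots,n\}\\ |I|=n-i+1}}\;\max_{k\in I}\,w_{k}\,\|x_j-a_{k}\|_{\tau},
\end{equation*}
which is continuous as a $\min$-$\max$ of continuous functions. Consequently $f_{\lambda}^{NI}$ is a nonnegative combination of continuous functions, hence continuous; the rearrangement-inequality identity $\dsum_{i}\lambda_{ij}d_{(i)}(x_j)=\dmax_{\sigma\in\mathcal{P}_n}\dsum_{i}\lambda_{ij}w_{\sigma(i)}\|x_j-a_{\sigma(i)}\|_{\tau}$ moreover shows it is convex, which is a useful (but not strictly necessary) by-product.

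Next, I would fix a reference configuration $\bar X=(a_1,\ldots,a_1)$, set $M:=f_{\lambda}^{NI}(\bar X)<\infty$, and consider the sublevel set $S:=\{X\in(\R^d)^p:f_{\lambda}^{NI}(X)\le M\}$, which is closed by continuity. Using the triangle inequality $\|x_j-a_k\|_{\tau}\ge\|x_j\|_{\tau}-\|a_k\|_{\tau}$, as $\|x_j\|_{\tau}\to\infty$ every ordered statistic $d_{(i)}(x_j)$ tends to $\infty$ provided some $w_k>0$. In the generic case where each facility $x_j$ is either directly coupled to a demand point through a positive $\lambda_{ij}w_k$ or transitively linked, via a chain of positive $\mu_{jj'}$, to such an ``active'' facility, this forces $f_{\lambda}^{NI}(X)\to\infty$ along every unbounded sequence in $S$. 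Hence $S$ is bounded and therefore compact, and Weierstrass' theorem produces a minimizer of $f_{\lambda}^{NI}$.

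Finally, I would handle the degenerate cases in which $f_{\lambda}^{NI}$ is invariant under a nontrivial subspace of common translations. Consider the graph on $\{1,\dots,p\}$ whose edges are the pairs with $\mu_{jj'}>0$: any connected component containing no vertex $j$ with $\dsum_{i}\lambda_{ij}w_i>0$ may be freely translated without changing the objective, so fixing its variables at $a_1$ produces an equivalent reduced problem to which the previous coercivity argument applies. Lifting back a minimizer of the reduced problem yields a solution of \ref{pro:Multi1}. The main obstacle is precisely this bookkeeping of invariant directions; once they are quotiented out, coercivity holds and the conclusion is a routine application of Weierstrass.
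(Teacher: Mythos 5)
Your overall route---continuity, coercivity of a sublevel set, Weierstrass, together with a separate treatment of translation-invariant blocks of facilities---is sound in structure and genuinely different from the paper's argument, which first proves convexity of $f_{\lambda}^{NI}$, then bounds an optimal solution of the coincident-facility restriction ($x_1=\cdots=x_p$) by $2\max_i\|a_i\|_\tau$ and invokes Weierstrass on the box $\{x:\|x_j\|_\tau\le M,\ j=1,\ldots,p\}$. Your version is more explicit about the degenerate zero-weight configurations, which the paper's short proof does not discuss, and your min--max representation of the order statistics is a clean way to get continuity.

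However, one step as written would fail: the test you use to decide which facilities are ``active'' (and hence which components of the $\mu$-graph may be frozen at $a_1$) is $\sum_i\lambda_{ij}w_i>0$, which pairs $\lambda_{ij}$ with $w_i$. That pairing is not the one occurring in the objective: $\lambda_{ij}$ multiplies the $i$-th largest of the values $w_k\|x_j-a_k\|_\tau$, so the two indices are decoupled by the sorting. Concretely, take $n=2$, $w_1=0$, $w_2=1$, $\lambda_{1j}=1$, $\lambda_{2j}=0$: then $\sum_i\lambda_{ij}w_i=0$, yet $\sum_i\lambda_{ij}d_{(i)}(x_j)=\|x_j-a_2\|_\tau$ is coercive in $x_j$, so this facility is not freely translatable, and freezing it at $a_1$ does not produce an equivalent reduced problem (lifting back the reduced minimizer need not be optimal). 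Relatedly, only the first $\#\{k:w_k>0\}$ order statistics tend to infinity as $\|x_j\|_\tau\to\infty$; the remaining ones are identically zero. The repair is immediate using the monotonicity $\lambda_{1j}\ge\cdots\ge\lambda_{nj}\ge 0$: declare $x_j$ active if and only if $\lambda_{ij}>0$ for some $i\le\#\{k:w_k>0\}$ (equivalently, $\lambda_{1j}>0$ and some $w_k>0$). With this test a non-active facility contributes exactly zero to the $\lambda$-part, so the components of the $\mu$-graph containing no active vertex are genuinely translation-invariant, and your reduction, coercivity of the sublevel set, and the Weierstrass conclusion all go through.
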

\begin{proof}
We know that
$$
\dsum_{i=1}^n\dsum_{j=1}^p \lambda_{ij} d_{(i)}(x_j)=\dsum_{i=1}^n\max_{\sigma}\dsum_{j=1}^p \lambda_{ij} w_{\sigma(i)}\|x_j-a_{\sigma(i)}\|_{\tau},
$$
where $\sigma$ is a permutation of the set $\{1,2,...,p\}$. Therefore, the first part of the objective function is a sum of maxima of convex functions. Hence, it is a convex function. Thus, $f_{\lambda}^{NI}$ is a convex function as a sum of convex functions.

Next, suppose that we restrict to consider the problem where $x_j=x$ for all $j=1,...,p$. Assume that $x^*$ is a solution. Then, for any $x$ not optimal it must exist $i_x\in\{1,...,n\}$ such that
$$
\|x-a_{i_x}\|_\tau\geq\|x^*-a_{i_x}\|_\tau.
$$
Thus taking $x=0$, we get
\begin{eqnarray}
  \|x^*-a_{i_0}\|_\tau &\leq& \|a_{i_0}\|_\tau, \nonumber\\
  \Longrightarrow\qquad \|x^*\|_\tau-\dmax_{i=1,...,n}\|a_{i}\|_\tau &\leq& \dmax_{i=1,...,n}\|a_{i}\|_\tau, \nonumber\\
  \Longrightarrow\qquad\qquad\qquad\qquad\;\;\; \|x^*\|_\tau &\leq& 2\dmax_{i=1,...,n}\|a_{i}\|_\tau=M. \label{Mbound}
\end{eqnarray}
We denote by $\mathcal{X}$ the set $\{x\in\R^{p\times d}:\;\|x_j\|_\tau\leq M,\;\forall j=1,...,p\}$.
Our problem consists of minimizing the convex function $f_{\lambda}^{NI}$, thus it is continuous over $\mathcal{X}$ which is compact in $\R^{p\times d}$ and consequently by Weierstrass theorem, problem \ref{pro:Multi1} admits at least one solution.
\qed
\end{proof}

\begin{remark}
Unlike the single facility problem, a solution of the multifacility problem is not unique in general. The following example shows what may happen. Consider the two-facility problem with set of demand point $A=\{(0,0),(0,1),(1,1),(0,1)\}$. Then, any point $(x_1,x_2)\in\big([(0,0),(0,1)], [(1,1),(0,1)]\big)$ is an optimal solution.
\end{remark}

Next we prove that Problem \ref{pro:Multi1} can be equivalently written as the following problem what will allow us the development of an efficient algorithm based on the theory of semidefinite programming.

\begin{theorem} \label{th:repMF1}
Let $\tau=\frac{r}{s}$ be such that $r,s\in\N\setminus\{0\}$, $r\ge s$ and $\gcd(r,s)=1$. For any set of lambda weights satisfying $\lambda_{1j}\geq . . . \geq\lambda_{nj}\ge 0$ for all $j=1,\ldots,p$, Problem \eqref{pro:Multi1} is equivalent to

\begin{eqnarray}
\rho_\lambda^{NI} = \min& \displaystyle \sum_{i=1}^{n}\sum_{j=1}^p v_{ij}+\sum_{\ell=1}^{n}\sum_{j=1}^p w_{\ell j} +\sum_{j =1}^{p-1}\sum_{j'=j+1}^p  t_{j j'} & \label{genpb-n1} \\
s.t &  v_{ij}+w_{\ell j}\geq\lambda_{\ell j}u_{ij} , & \forall i,\ell=1,...,n,\; j=1,\ldots,p \label{eq:n1-1} \\
 & y_{ijk}-x_{jk}+a_{ik}\ge 0,& i=1,\ldots,n,\; j=1,...,p,\;k=1,\ldots,d \label{eq:n1-2}\\
 & y_{ijk}+x_{jk}-a_{ik}\ge 0,& i=1,\ldots,n,\; \; j=1,...,p,\;k=1,\ldots,d, \label{eq:n1-3}\\
& y_{ijk}^r\leq \varsigma_{ijk}^{s}u_{ij}^{r-s},& i=1,\ldots,n,\; \; j=1,...,p,\;k=1,\ldots,d, \label{eq:n1-4}\\
&\omega_{i}^{\frac{r}{s}}\dsum_{k=1}^d \varsigma_{ijk}\le u_{ij},& i=1,\ldots,n,\; j=1,\ldots,p \label{eq:n1-5}\\
 & z_{j j'k}-x_{jk}+x_{j' k}\ge 0,& j,j'=1,\ldots,p,\; k=1,...,d, \label{eq:n1-6}\\
 & z_{j j'k}+x_{jk}-x_{j' k}\ge 0,& j,j'=1,\ldots,p,\; k=1,...,d,\label{eq:n1-7}\\
& z_{j j'k}^r\leq \xi_{j j'k}^{s}t_{j j'}^{r-s},& j,j'=1,\ldots,p,\; k=1,\ldots,d, \label{eq:n1-8}\\
&\mu_{j j'}^{\frac{r}{s}}\dsum_{k=1}^d \xi_{j j'k}\le t_{j j'},& j ,j'=1,\ldots,p,\; \label{eq:n1-9}\\
& \varsigma_{ijk}\ge 0,&  i=1,\ldots,n,\; j=1,\ldots,p,\;  k=1,\ldots,d, \label{eq:n1-10}\\
& \xi_{j j' k}\ge 0 & j, j' =1,\ldots,p,\;  k=1,\ldots,d. \label{eq:n1-11}
\end{eqnarray}

Moreover, Problem (\ref{genpb-n1}) satisfies Slater condition and it can
be represented as a semidefinite program with $(np+p^2)(2d+1)+p^2$  linear inequalities   and  at most $4(p^2d+npd)\log r$ linear matrix inequalities.
\end{theorem}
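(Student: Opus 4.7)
My plan is to prove the equivalence between \eqref{pro:Multi1} and \eqref{genpb-n1} in two essentially independent steps (ordered weighted sum and $\ell_\tau$-norm modeling), and then separately address SDP-representability, the explicit constraint count, and Slater's condition. For the ordered weighted sum $\sum_{i=1}^n \lambda_{ij} d_{(i)}(x_j)$, I would invoke LP-duality for the assignment polytope: for any nonnegative vector $(u_{1j},\ldots,u_{nj})$ and ordered weights $\lambda_{1j}\ge\cdots\ge\lambda_{nj}\ge 0$,
$$\sum_{i=1}^n \lambda_{ij}\, u_{(i)j} \;=\; \min\Big\{\sum_{i}v_{ij}+\sum_{\ell}w_{\ell j}\,:\, v_{ij}+w_{\ell j}\ge \lambda_{\ell j}\, u_{ij},\ \forall\, i,\ell\Big\},$$
so the family \eqref{eq:n1-1} together with the first two sums in the objective of \eqref{genpb-n1} reproduces the ordered sum, provided I can force $u_{ij}=\omega_i\|x_j-a_i\|_\tau$ at any optimum.

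To linearize $\omega_i\|x_j-a_i\|_\tau\le u_{ij}$ componentwise I would introduce $y_{ijk}\ge|x_{jk}-a_{ik}|$ (constraints \eqref{eq:n1-2}--\eqref{eq:n1-3}) and auxiliary variables $\varsigma_{ijk}\ge 0$. Constraint \eqref{eq:n1-4} reads $y_{ijk}^{r}\le \varsigma_{ijk}^{s} u_{ij}^{r-s}$, equivalent (since $\tau=r/s$ and $r\ge s$) to $\varsigma_{ijk}\ge y_{ijk}^{\tau}/u_{ij}^{\tau-1}$. Summing in $k$ and combining with \eqref{eq:n1-5} gives $u_{ij}\ge \omega_i^{\tau}\sum_{k}\varsigma_{ijk}\ge\omega_i^{\tau}\|x_j-a_i\|_\tau^{\tau}/u_{ij}^{\tau-1}$, i.e.\ $u_{ij}\ge\omega_i\|x_j-a_i\|_\tau$; this inequality is tight at any optimum of \eqref{genpb-n1} because the objective is monotonically increasing in $u_{ij}$ through \eqref{eq:n1-1}. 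A verbatim repetition with constraints \eqref{eq:n1-6}--\eqref{eq:n1-11} yields $t_{jj'}\ge\mu_{jj'}\|x_j-x_{j'}\|_\tau$ tightly, reproducing the penalty term of \eqref{FI}, and hence the claimed equivalence.

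For the remaining claims, the constraints in \eqref{eq:n1-1}--\eqref{eq:n1-3}, \eqref{eq:n1-5}--\eqref{eq:n1-7} and \eqref{eq:n1-9}--\eqref{eq:n1-11} are all linear and a direct enumeration yields the bound $(np+p^2)(2d+1)+p^2$. The only non-polyhedral pieces are the $npd+p^2 d$ power-cone inequalities $y^{r}\le\varsigma^{s}u^{r-s}$ appearing in \eqref{eq:n1-4} and \eqref{eq:n1-8}. Each of them I would represent as a tower of rotated second-order cone constraints $ab\ge c^{2}$ via the standard Ben-Tal--Nemirovski construction that mirrors the binary expansion of $r$, using at most $4\log r$ rotated SOCs per tower, each in turn a $2\times 2$ linear matrix inequality; this yields the stated $4(npd+p^2 d)\log r$ bound on LMIs. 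Slater's condition is immediate: any choice of $x$ together with sufficiently large $y_{ijk},z_{jj'k},\varsigma_{ijk},\xi_{jj'k},u_{ij},t_{jj'},v_{ij},w_{\ell j}$ makes every inequality strict. The only delicate point is the explicit $O(\log r)$ rotated-SOC tower for $y^{r}\le\varsigma^{s}u^{r-s}$: one has to introduce geometric-mean auxiliary variables indexed by the binary digits of $r$ and verify nonnegativity at each level; everything else is routine bookkeeping of assignment-LP duality and the standard linearization of the $\ell_\tau$-norm.
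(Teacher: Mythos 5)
Your proposal follows essentially the same route as the paper's proof: the ordered sum is handled by exactly the same assignment-polytope LP-duality identity $v_{ij}+w_{\ell j}\ge\lambda_{\ell j}u_{ij}$, the $\ell_\tau$-norm constraints by the same $y,\varsigma$ (resp.\ $z,\xi$) substitution based on $y^{r}\le\varsigma^{s}u^{r-s}$, and the LMI count by the same $4\log r$ binary-expansion representation of each power inequality (the paper cites \cite[Lemma 3]{BPS14}, which is the Ben-Tal--Nemirovski construction you describe). The only nuance is that your Slater argument cannot take all auxiliary variables ``sufficiently large'' independently, since \eqref{eq:n1-5} and \eqref{eq:n1-9} couple $\varsigma,\xi$ with $u,t$; one must choose them in the proper order (large $u_{ij},t_{jj'}$ first, then moderate $\varsigma,\xi$, then $v,w$), which is precisely what the paper's explicit strictly feasible point does.
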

\begin{proof}
Note that the condition $\lambda_{1j}\geq . . . \geq\lambda_{nj}$ for all $j=1,...,p$, allows us to write Problem (\ref{pro:Multi1})  as

\begin{equation}\label{pb:init}
        \displaystyle\min_{x\in\R^{dp}}\max_{\sigma\in \mathcal{P}_n}\quad\dsum_{i=1}^{n}\dsum_{j=1}^{p}\lambda_{ij}\omega_{\sigma(i)}\|x_j-a_{\sigma(i)}\|_{\tau}+\dsum_{j=1}^{p-1}\dsum_{j'=j+1}^{p}\mu_{jj'}\|x_j-x_{j'}\|_{\tau},
\end{equation}
Let us introduce the auxiliary variables $u_{ij}$ and $t_{jj'}$, $i=1,\ldots,n$ and $j,j'=1,\ldots,p$ to which we impose that $ u_{ij}\geq\omega_{i}\|x_j-a_{i}\|_{\tau}$ and $t_{jj'}\geq\mu_{jj'}\|x_j-x_{j'}\|_{\tau}$, to model the problem in a convenient form.

Now, for any permutation $\sigma \in \mathcal{P}_n$, let $u_{\sigma j}=(u_{\sigma(1)j},\ldots,u_{\sigma(n)j})$ for $j=1,...,p$. Moreover, let us denote by $(\cdot)$ the permutation that sorts any vector in nonincreasing sequence, i.e. $u_{(1)j}\ge u_{(2)j}\ge \ldots \ge u_{(n)j}$.
Using that $\lambda_{1j}\geq...\geq\lambda_{nj}$ and since  $u_{ij_{}}\ge 0$, for all $i=1,\ldots,n$ and $j=1,\ldots,p$ then
$$
\displaystyle \sum_{i=1}^{n}\sum_{j=1}^{p}\lambda_{ij}u_{(i)j}=\max_{\sigma\in \mathcal{P}_n}\displaystyle \sum_{i=1}^{n}\sum_{j=1}^{p}\lambda_{ij}u_{\sigma(i)j}.
$$

The permutations in $\mathcal{P}_n$ can be represented by the following binary variables
$$
p_{ijk}=\left\{
         \begin{array}{ll}
           1, & \mbox{ if } u_{ij} \mbox{ goes in position } k, \\
           0, & \mbox{ otherwise},
         \end{array}
       \right.
$$

imposing that they verify the following constraints:

\begin{equation}
\left\{
  \begin{array}{ll}
    \displaystyle \sum_{i=1}^{n}p_{ijk}=1, & \forall j=1,...,p,\;k=1,...,n, \\
    \displaystyle \sum_{k=1}^{n}p_{ijk}=1, & \forall i=1,...,n,\;j=1,...,p.
  \end{array}
\right.
\end{equation}

Next, combining the two sets of variables we obtain that the objective function of \eqref{pb:init} can be equivalently written as
\begin{equation}\label{sys0}
    \left\{
      \begin{array}{ll}
        \displaystyle \sum_{i=1}^{n}\sum_{j=1}^{p}\lambda_{ij}u_{(i)j} =&\max\displaystyle \sum_{i=1}^{n}\sum_{j=1}^{p}\sum_{k=1}^{n}\lambda_{ij}u_{ij}p_{ik} \\
        & s.t \quad \displaystyle \sum_{i=1}^{n}p_{ijk}=1, \; \forall j=1,...,p,\;k=1,...,n, \\
        & \qquad \displaystyle \sum_{k=1}^{n}p_{ijk}=1, \; \forall i=1,...,n,\;j=1,...,p,\\
        & \qquad p_{ijk}\in\{0,1\}.
      \end{array}
    \right.
\end{equation}
Now, we point out that for fixed $j$ i.e. $u_{1j},...,u_{nj}$, we have
\begin{equation}\label{sys1}
    \left\{
      \begin{array}{ll}
        \displaystyle \sum_{i=1}^{n}\lambda_{ij}u_{(i)j} =&\max\displaystyle \sum_{i=1}^{n}\sum_{k=1}^{n}\lambda_{ij}u_{ij}p_{ijk} \\
        & s.t \quad \displaystyle \sum_{i=1}^{n}p_{ijk}=1, \; \forall k=1,...,n, \\
        & \qquad \displaystyle \sum_{k=1}^{n}p_{ijk}=1, \; \forall i=1,...,n,\\
        & \qquad p_{ijk}\in\{0,1\}.
      \end{array}
    \right.
\end{equation}
The problem below is an assignment problem and its constraint matrix is totally unimodular, so that solving a continuous relaxation of the problem always yields an integral solution vector \cite{AMO1993}, and thus a valid permutation.
Moreover, the dual of the linear programming relaxation of (\ref{sys1}) is strong and also gives the value of the original binary formulation of (\ref{sys1}).
Hence, for fixed $j\in\{1,...,p\}$ and for any vector $u_{\cdot j}\in \R^{n}$, by using the dual of the assignment problem (\ref{sys1}) we obtain the following expression
\begin{equation}\label{sys2}
    \left\{
       \begin{array}{ll}
         \displaystyle \sum_{i=1}^{n}\lambda_{ij}u_{(i)j}=&  \min\displaystyle \sum_{i=1}^{n}v_{ij}+\sum_{l=1}^{n}w_{lj} \\
         &  s.t \quad v_{ij}+w_{lj}\geq\lambda_{lj}u_{ij},\; \forall i,l=1,...,n.
       \end{array}
     \right.
\end{equation}

Finally, we replace (\ref{sys2}) in (\ref{pb:init}) and we get

\begin{equation}\label{genpb1}
    \left\{
      \begin{array}{ll}
        \min\displaystyle \sum_{i=1}^{n}\sum_{j=1}^{p}v_{ij}+\sum_{l=1}^{n}\sum_{j=1}^{p}w_{lj}+\sum_{j=1}^{p-1}\sum_{j'=j+1}^{p}t_{jj'}\\
        s.t \quad v_{ij}+w_{lj}\geq\lambda_{lj}u_{ij} , & \forall i,l=1,...,n,\;j=1,...,p, \\
        \qquad u_{ij}\geq\omega_{i}\|x_j-a_{i}\|_{\tau},& i=1,...,n,\;j=1,...,p,\\
        \qquad t_{jj'}\geq\mu_{jj'}\|x_j-x_{j'}\|_{\tau},& j,j'=1,...,p.
      \end{array}
    \right.
\end{equation}
It remains to prove that each inequality $u_{ij}\geq\omega_{i}\|x_j-a_{i}\|_{\tau},\;i=1,...,n,\;j=1,...,p$ can be replaced by the system
\begin{eqnarray*}
 y_{ijk}-x_{jk}+a_{ik}\ge 0,&  k=1,...,d, \\
 y_{ijk}+x_{jk}-a_{ik}\ge 0,&  k=1,...,d,\\
y_{ijk}^r\leq \varsigma_{ijk}^{s}u_{ij}^{r-s},&  k=1,...,d, \\
\omega_{i}^{\frac{r}{s}}\dsum_{k=1}^d \varsigma_{ijk}\le u_{ij},& \\
 \varsigma_{ijk}\ge 0,& \forall\;  k=1,\ldots,d.
\end{eqnarray*}

Indeed, set $\rho=\frac{r}{r-s}$, then $\frac{1}{\rho}+\frac{s}{r}=1$. Let $(\bar x_{j},\bar u_{ij})$ fulfill the inequality  $u_{ij}\geq\omega_{i}\|x_j-a_{i}\|_{\tau}$. Then we have
\begin{eqnarray}
  \omega_{i}\|\bar x_j-a_{i}\|_{\tau}\leq \bar u_{ij} &\Longleftrightarrow& \omega_{i}\left(\sum_{k=1}^{d}|\bar x_{jk}-a_{ik}|^{\frac{r}{s}}\right)^{\frac{s}{r}}\leq \bar u_{ij}^{\frac{s}{r}}\bar u_{ij}^{\frac{1}{\rho}} \nonumber \\
   &\Longleftrightarrow& \omega_{i}\left(\sum_{k=1}^{d}|\bar x_{jk}-a_{ik}|^{\frac{r}{s}}\bar u_{ij}^{\frac{r}{s}(-\frac{r-s}{r})}\right)^{\frac{s}{r}}\leq \bar u_{ij}^{\frac{s}{r}}\nonumber \\
&\Longleftrightarrow& \omega_{i}^{\frac{r}{s}}\sum_{k=1}^{d}|\bar x_{jk}-a_{ik}|^{\frac{r}{s}}\bar u_{ij}^{-\frac{r-s}{s}}\leq \bar u_{ij} \label{eq1}
\end{eqnarray}

Then (\ref{eq1}) holds if and only if $\exists \varsigma_{ij}\in\R^d$, $\varsigma_{ijk}\ge 0,\; \forall k=1,...,d$ such that
$$
|\bar x_{jk}-a_{ik}|^{\frac{r}{s}}\bar u_{ij}^{-\frac{r-s}{s}}\leq \varsigma_{ijk},\quad\mbox{ satisfying }\quad \omega_{i}^{\frac{r}{s}}\sum_{k=1}^{d}\varsigma_{ijk}\leq \bar u_{ij}, $$
or equivalently,
\begin{equation} \label{eq:n1} |\bar x_{jk}-a_{ik}|^{r}\leq \varsigma_{ijk}^{s}\bar u_{ij}^{r-s},\quad \omega_{i}^{\frac{r}{s}}\sum_{k=1}^{d}\varsigma_{ijk}\leq \bar u_{ij}.
\end{equation}
Set $\bar y_{ijk}=|\bar x_{jk}-a_{ik}|$ and $\bar \varsigma_{ijk}=|\bar x_{jk}-a_{ik}|^{\tau} \bar u_{ij}^{-1/\rho}$. Then, clearly $(\bar x_j,\bar u_{ij},\bar y_{ij},\bar \varsigma_{ij})$ satisfies (\ref{eq:n1-2})-(\ref{eq:n1-5}) and (\ref{eq:n1-10}).

Conversely, let $(\bar x_j,\bar u_{ij},\bar y_{ij},\bar \varsigma_{ij})$ be a feasible solution of (\ref{eq:n1-2})-(\ref{eq:n1-5}) and (\ref{eq:n1-10}). Then, $\bar y_{ijk}\ge |\bar x_{jk}-a_{ik}|$ for all $i,j$ and by (\ref{eq:n1-4}) $\bar \varsigma_{ijk}\ge \bar y_{ijk}^{(\frac{r}{s})}u_{ij}^{-\frac{r-s}{s}}\ge |\bar x_{jk}-a_{jk}|^{\tau} \bar u_{ij}^{-\frac{r-s}{s}}$. Thus,
$$ \omega_i^{\frac{r}{s}}\sum_{k=1}^d |\bar x_{jk}-a_{jk}|^{\frac{r}{s}} \bar u_{ij}^{-\frac{r-s}{s}} \le \omega_i^{\frac{r}{s}} \sum_{k=1}^d \bar \varsigma_{ijk} \le \bar u_{ij},$$
which in turns implies that $\omega_i^{\frac{r}{s}}\dsum_{k=1}^d |\bar x_{jk}-a_{jk}|^{\frac{r}{s}} \le \bar u_{ij} \bar u_{ij}^{\frac{r-s}{s}}$ and hence, $\omega_i\|\bar x_j-a_i\|_\tau \le \bar u_{ij}$.
In the same way we prove that each inequality $t_{jj'}\geq\mu_{jj'}\|x_j-x_{j'}\|_{\tau},\;j,j'=1,...,p$ can be replaced by the system
\begin{eqnarray*}
 z_{j j'k}-x_{jk}+x_{j' k}\ge 0,& k=1,...,d,\\
 z_{j j'k}+x_{jk}-x_{j' k}\ge 0,& k=1,...,d,\\
 z_{j j'k}^r\leq \xi_{j j'k}^{s}t_{j j'}^{r-s},& k=1,\ldots,d, \\
 \mu_{j j'}^{\frac{r}{s}}\dsum_{k=1}^d \xi_{j j'k}\le t_{j j'},& \\
 \xi_{ijk}\ge 0,& \forall\;  k=1,\ldots,d.
\end{eqnarray*}
Next, we observe that each one of the inequalities $y_{ijk}^r\leq \varsigma_{ijk}^{s}u_{ij}^{r-s},\; \; k=1,\ldots,d$ (respectively $ z_{j j'k}^r\leq \xi_{j j'k}^{s}t_{j j'}^{r-s},\; k=1,\ldots,d$) can be transformed, according to \cite[Lemma 3]{BPS14}, into $4\log r$ linear matrix inequalities (respectively $4\log r$ linear matrix inequalities),  being then exactly representable as second order cone constraints or semidefinite constraints.

Finally, it is straightforward to check Slater condition, for instance,  for the system (\ref{genpb1}). Set  $v_{ij}=1$, $w_{lj}=\frac{3}{2}M\lambda_{lj}\dmax_i\omega_i$, $u_{ij}=\frac{3}{2}M\omega_i+2$ with $M>>0$ large enough and $t_{jj'}=2M\mu_{jj'}+1$ for $i,l=1,...,n$ and $j,j'=1,...,p$.
 \qed
\end{proof}

In the particular case where $\tau=1$ (namely $r=s=1$) or the considered norm is polyhedral, the above problem reduces to a standard linear problem and the number of variables and inequalities is reduced.

As a consequence of Theorem \ref{th:repMF1}, Problem (\ref{pro:Multi1})  can be solved in polynomial time for any dimension $d$, by solving its reformulation as the SDP problem (\ref{genpb-n1})-(\ref{eq:n1-9}). The reader may note that this is an important step forward with respect to the already stated complexity results (see e.g. \cite{RNPF00}). There, it is proven that these problems are polynomial in $\R^2$ and polyhedral norms. Here we extend this complexity result for any polyhedral or $\ell_{\tau}$-norm and in any finite dimension.

\begin{ex}
\label{ex1}
Consider the two-facility problem with set of four demand points
$$A=\{(9.46, 9.36), (8.93, 7.00), (2.20, 1.12), (1.33, 8.89)\}$$
(a subset of the 50-cities data set from \cite{eilon-watson}), and (randomly generated-) lambda weights:
\begin{align*}
\lambda_{11} = 147.31, & \lambda_{12}= 119.08\\
\lambda_{21} = 24.44, & \lambda_{22} =  0.56\\
\lambda_{31} = 24.16, & \lambda_{32} = 0.00\\
\lambda_{41} = 10.77, & \lambda_{42} =  0.00
\end{align*}
$\mu_{12} =0.56$ and norm $\ell_2$.

Therefore, the problem to be solved can be written as:
\begin{eqnarray*}
\min_{x_1,x_2 \in \R^2} &  147.31d_{(1)}(x_1)+ 24.44 d_{(2)}(x_1)+ 24.16d_{(3)}(x_1)+ 10.77d_{(4)}(x_1) + 119.08d_{(1)}(x_2)+ \\
                        & +0.56d_{(2)}(x_2)+ 0.00d_{(3)}(x_2)+0.00d_{(4)}(x_2)+0.56\|x_1-x_2\|_2
\end{eqnarray*}

Then we get as solution: $x_1^*=(5.24, 6.41)$ and $x_2^*=(5.61, 5.44)$, with objective value $f^* = 1704.55$. Figure \ref{fig1} shows the points and the solutions of the problem.

\tikzset{mark options={mark size=1.25pt}}

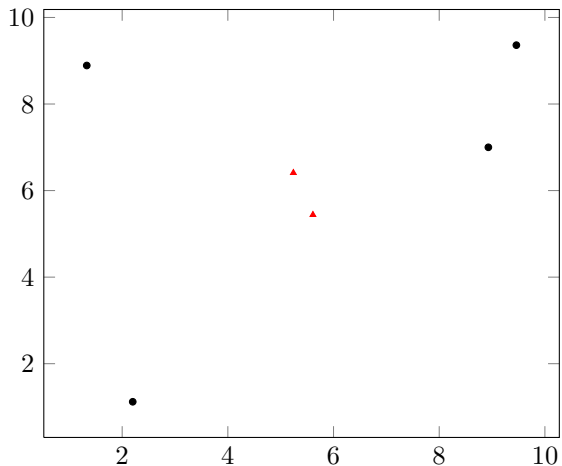
\begin{figure}
\label{fig1}
\begin{center}
\begin{tikzpicture}

\begin{axis}

\addplot[
scatter,
only marks,
point meta=explicit symbolic,
scatter/classes={
a={mark=*,black},%
w={mark=triangle*,red}
},
]
table[meta=label] {
x   y   label
9.46    9.36    a
8.93    7.00    a
2.20    1.12    a
1.33    8.89    a
5.24    6.41    w
5.61    5.44    w
};
\end{axis}
\end{tikzpicture}
\caption{Points in Example \ref{ex1} (filled circles) and solutions (triangles)}.
\end{center}
\end{figure}

\end{ex}

\section{Single Allocation Multifacility Location Problems with ordered median objective functions \label{s:locomf}}

The difference of the single allocation multifacility problems with those considered in the previous section rests on the fact that now each demand point shall be directed to a unique serving facility by means of a predetermined allocation rule (usually closest distance). This little difference makes the problem much more difficult since the convexity properties exhibited in the previous models are no longer valid and more sophisticated tools must be used to solve these problems.

In this framework, we are given a set $\{a_1,\ldots,a_n\}\subset \mathbb{R}^d$ endowed with a $\ell_{\tau}$-norm; and a feasible domain $\K=\{x \in \R^d: g_j(x)\geq 0, j=1, \ldots, m\}\subset \mathbb{R}^d$, closed and semi-algebraic. The goal is to find $p$ points $x_1, \ldots, x_p \in \mathbf{K}\subset \mathbb{R}^d$ minimizing some globalizing function of the shortest distances to  the set of demand points.

The main feature and what distinguishes multifacility location problems from other general purpose optimization problems, is that the dependence of the decision variables is given throughout the norms to the demand points, i.e. $\|x-a_i\|_{\tau}$.

For the ease of presentation we have restricted ourselves  to the particular case of pure location problem, namely $\tilde f_i(x):= \dmin_{j=1\ldots p} \|x_j-a_i\|_{\tau}$ which has attracted a lot of attention in the literature of location analysis. Needless to say that our methodology applies to more general forms of objective function, namely we could handle general rational functions of the distances as for instance in \cite{BPS12}.

We shall define the dependence of  the decision variables $x_1, \ldots, x_p \in \mathbb{R}^d$ via $t=(t_1, \ldots, t_n)$, where $t_{i}:\mathbb{R}^{pd}\mapsto \mathbb{R}$, $t_{i}(x_1, \ldots, x_p):= \dmin_j \|x_j-a_i\|_{\tau}$, $i=1,\ldots,n$. Therefore, the $i$-th component of the ordered  median objective function of our problems reads as
$$\begin{array}{llll} \tilde f_i(x):&\mathbb{R}^{pd} &\mapsto &\mathbb{R} \\
& x=(x_1,\ldots,x_p) &\mapsto &\displaystyle  t_i:=\min_{j=1..p} \{\|x_j-a_i\|_{\tau}\}.
\end{array} $$

Consider the following problem
\begin{equation*} \label{pro:LocPop}\tag{$\mathbf{LOCOMF}$}
\rho_\lambda \,:=\displaystyle\min_{x}\{\dsum_{i=1}^n \lambda_i \tilde f_{(i)}(x): x=(x_1,\ldots,x_p), \; x_j\in \mathbf{K}, \, \forall j=1,\dots,p\},
\end{equation*}%
where:

$\bullet$  $\K \subseteq \R^{d}$ satisfies the Archimedean property. Without loss of generality we shall assume that we know $M>0$ such that $\|x_j\|_2\le M$, for all $j=1,\ldots,p$.

$\bullet$ $\tau:=\frac{r}{s}\ge 1$, $r,s\in \mathbb{N}$ with $gcd(r,s)=1$.

$\bullet$  $\lambda_{\ell}\ge 0$ for all $\ell=1,\dots,n.$

First of all, we observe that problem \ref{pro:LocPop} is well defined and that it has optimal solution. Indeed, we are minimizing a continuous function over a compact set in $\R^{d}$. Thus, by Weierstrass theorem problem \ref{pro:LocPop} admits an optimal solution.

\subsection{A second order cone mixed integer programming approach to solve \ref{pro:LocPop}\label{ss:prolanda2}}

In this section, we present a tractable formulation of problem \ref{pro:LocPop} as a mixed integer nonlinear program with linear objective function.  For each $i \in \{1, \ldots, n\}$, we set $UB_i$ as a valid upper bound on the value of $\|\bar{x}_j-a_i\|_\tau$, $\bar{x}_j\in\K$.

We introduce the following auxiliary problem
\label{}
\begin{align}
 \quad \hat{\rho}_\lambda = \displaystyle \min & \sum_{\ell=1}^{n}\lambda_{\ell}\theta_{\ell}  \label{pro:locgen2}\tag{${\rm MFOMP}_\lambda$}\\
 \mbox{s.t.  } h_{il}^1&:=t_i\le \theta_{\ell} +UB_i(1-w_{i\ell}), & i=1,\ldots,n, \;  \ell=1,\ldots,n,\label{c:ti2} \\
 h_{l}^2&:=\theta_{\ell}\ge \theta_{\ell+1}, &  \ell=1,\ldots,n-1, \label{c:theta1} \\
 h_{ij}^3&:=u_{ij}\le t_i+UB_i(1-z_{ij}), &  \forall \; i=1,\ldots, n,\; j=1,\ldots,p,  \label{c:ti1} \\
 h_{ijk}^4&:=v_{ijk}-x_{jk}+a_{ik}\ge 0,& i=1,\ldots,n,\; j=1,\ldots,p,\;  k=1,...,d, \label{c:n1-1} \\
 h_{ijk}^5&:=v_{ijk}+x_{jk}-a_{ik}\ge 0,& i=1,\ldots,n,\; j=1,\ldots,p,\; k=1,..,d,  \label{c:n1-2}    \\
 h_{ijk}^6&:=v_{ijk}^r\leq \zeta_{ijk}^{s}u_{ij}^{r-s},& i=1,\ldots,n,\; j=1,\ldots,p,\; k=1,\ldots,d,  \label{c:n1-3}   \\
 h_{ij}^7&:=\sum_{k=1}^d \zeta_{ijk}\le u_{ij},& i=1,\ldots,n,\; j=1,\ldots,p, \label{c:n1-4}   \\
 h_{i}^8&:=\dsum_{j=1}^pz_{ij}=1,& \; i=1,\ldots,n,   \label{c:n1-5} \\
 h_{l}^9&:=\dsum_{i=1}^nw_{il}=1,& \; l=1,\ldots,n,  \label{c:n1-6}  \\
 h_{i}^{10}&:=\dsum_{l=1}^nw_{il}=1,& \; i=1,\ldots,n,\label{c:n1-7} \\
&w_{i\ell} \in \{0,1\}, & \; \forall\ i,\ell=1,\ldots,n,  \label{c:domain-w}\\
&z_{ij} \in \{0,1\}, &\; \forall\ i=1,\ldots,n, j=1, \ldots, p,  \label{c:domain-z}\\
& \theta_{\ell},\;t_i,\; v_{ijk},\;\zeta_{ijk},\;u_{ij} \in \R^+, & i,l=1, \ldots, n, j=1, \ldots, p, k=1,\ldots,d,   \label{c:domain-tvud} \\
&x_j\in \mathbf{K}, & j=1, \ldots, p.   \label{c:domain-x}
\end{align}

With constraints (\ref{c:ti2})-(\ref{c:theta1}) we enforce the variable $\theta_l$ to assume the value $t_i$ that is sorted in the $l$-th position of the vector $t$, while constraints (\ref{c:ti1})-(\ref{c:n1-4}) model the evaluation of $\|x_j-a_i\|_{\tau}$ for all $i$ and $j$.  Constraints (\ref{c:n1-6})-(\ref{c:domain-w}) model  permutations, and constraints (\ref{c:n1-5}) and (\ref{c:domain-z}) are introduced to model the allocation of element indexed by $i$ to a unique index $j$. Therefore, putting all the above ingredients together we get that in the optimum $t_i=\dmin_j\|x_j-a_i\|_{\tau}$.

Let us denote  by $\{h_1,\dots,h_{nc1}\}$ with $nc1:=3n+n^2+n-1+np(3d+2)=n^2+4n+np(3d+2)-1$ the constraints in the problem above, once excluded those defining $\mathbf{K}$. Let $\mathbf{\hat{K}}$ denote the feasible domain of Problem \ref{pro:locgen2}.

\begin{theorem} \label{t:equi-sl}
Let $x$ be a feasible solution of \ref{pro:LocPop} then there exists a solution $(x,z,u,v,\zeta,w, t,\theta)$ for \ref{pro:locgen2} such that their objective values are equal. Conversely, if $(x,z,u,v,\zeta,w, t,\theta)$ is a feasible solution for \ref{pro:locgen2} then  $x$ is a feasible solution for \ref{pro:LocPop}. Furthermore, if $K$ satisfies Slater condition then the feasible region of the continuous relaxation of \ref{pro:locgen2} also satisfies Slater condition and $\rho_{\lambda}=\hat{\rho}_{\lambda}$.
\end{theorem}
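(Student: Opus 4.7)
The plan is to establish the feasibility correspondence in both directions, deduce the equality $\rho_\lambda=\hat{\rho}_\lambda$ from them, and finally exhibit a strictly feasible point of the continuous relaxation.

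For the forward direction, given $x=(x_1,\ldots,x_p)$ feasible for \ref{pro:LocPop}, I would set $t_i:=\tilde{f}_i(x)=\min_j\|x_j-a_i\|_\tau$, pick for each $i$ some minimizer $j^*(i)$ and put $z_{i,j^*(i)}=1$ (zero elsewhere); take $u_{ij}:=\|x_j-a_i\|_\tau$, $v_{ijk}:=|x_{jk}-a_{ik}|$, and $\zeta_{ijk}:=|x_{jk}-a_{ik}|^\tau u_{ij}^{-(r-s)/s}$ when $u_{ij}>0$ (zero otherwise); let $\sigma$ be a permutation sorting $(t_i)$ non-increasingly and set $\theta_\ell:=t_{\sigma(\ell)}$ and $w_{\sigma(\ell),\ell}:=1$ (zero elsewhere). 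Constraints \eqref{c:n1-1}--\eqref{c:n1-4} reproduce the H\"older-type calculation already carried out in the proof of Theorem \ref{th:repMF1} (with $\omega_i$ taken equal to one); \eqref{c:ti1} is tight at $j=j^*(i)$ and trivial for $j\neq j^*(i)$ since $\|x_j-a_i\|_\tau\le UB_i$; \eqref{c:ti2} and \eqref{c:theta1} follow from the sorted definition of $\theta$; and \eqref{c:n1-5}--\eqref{c:n1-7} are enforced by construction. The objective then reads $\sum_\ell\lambda_\ell\theta_\ell=\sum_\ell\lambda_\ell\tilde{f}_{(\ell)}(x)$, matching \ref{pro:LocPop}.

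For the converse, feasibility of $x$ for \ref{pro:LocPop} is immediate since every $x_j\in K$. To derive $\rho_\lambda=\hat{\rho}_\lambda$ I would prove the objective inequality $\sum_\ell\lambda_\ell\theta_\ell\ge\sum_\ell\lambda_\ell\tilde f_{(\ell)}(x)$. Reversing the H\"older chain from Theorem \ref{th:repMF1} on \eqref{c:n1-1}--\eqref{c:n1-4} gives $u_{ij}\ge\|x_j-a_i\|_\tau$. Because $z_{ij}\in\{0,1\}$ with $\sum_j z_{ij}=1$, each row of $z$ has a unique active column $j(i)$, and \eqref{c:ti1} forces $t_i\ge u_{i,j(i)}\ge\tilde f_i(x)$. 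Similarly, $w$ is a permutation matrix (rows and columns of sums equal to one with 0/1 entries), and for the unique $\ell(i)$ with $w_{i,\ell(i)}=1$, \eqref{c:ti2} gives $t_i\le\theta_{\ell(i)}$; combining this with the monotonicity \eqref{c:theta1} and the elementary fact that a non-increasing sequence dominating a permutation of a vector pointwise must dominate its sorted (non-increasing) version, one obtains $\theta_\ell\ge t_{(\ell)}$. Since the pointwise bound $t_i\ge\tilde f_i(x)$ transfers to the sorted values, $\theta_\ell\ge\tilde f_{(\ell)}(x)$ for every $\ell$, and nonnegativity of the $\lambda_\ell$ yields the claimed objective inequality. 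Coupled with the forward direction this gives $\rho_\lambda=\hat{\rho}_\lambda$.

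For Slater of the continuous relaxation (relaxing $w_{i\ell},z_{ij}\in\{0,1\}$ to $[0,1]$) I would take an interior point $\bar x$ of $K$ provided by Slater and set $x_j=\bar x$ for all $j$; then $z_{ij}=1/p$ and $w_{i\ell}=1/n$ lie strictly inside $(0,1)$ while preserving every equality sum \eqref{c:n1-5}--\eqref{c:n1-7}. Finally choose $v_{ijk}=|\bar x_k-a_{ik}|+\varepsilon$, $u_{ij}$ strictly above $\|\bar x-a_i\|_\tau$, $\zeta_{ijk}$ strictly above the tight lower bound arising from \eqref{c:n1-3}--\eqref{c:n1-4}, $t_i$ strictly above $\max_j(u_{ij}-UB_i(1-z_{ij}))$, and $\theta_\ell$ strictly decreasing with slack in \eqref{c:ti2}; for $\varepsilon>0$ small enough every inequality is strict and every equality preserved, so a Slater point exists. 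The main obstacle I anticipate is the sorting/counting step that transfers $t_i\ge\tilde f_i(x)$ into the ordered inequality $\theta_\ell\ge\tilde f_{(\ell)}(x)$ in the converse; once that bridge is in place, the rest is careful bookkeeping of the auxiliary variables introduced above.
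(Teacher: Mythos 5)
Your proposal is correct and follows essentially the same route as the paper: the same explicit assignments $t_i=\min_j\|x_j-a_i\|_\tau$, indicator $z$, sorting permutation $w$, sorted $\theta$, and the H\"older-based variables $u,v,\zeta$ from Theorem \ref{th:repMF1} for the forward direction, the same trivial converse feasibility, and the same kind of strictly feasible point (interior $x_j$, fractional $z,w$, large slacks) for Slater. The only difference is that you spell out, via the counting argument giving $\theta_\ell\ge t_{(\ell)}$, the value inequality that the paper dismisses with ``clearly''; this is a welcome completion of the same argument rather than a different approach.
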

\begin{proof}
Let $\bar{x}=(\bar{x}_1,...,\bar{x}_p)$ be a feasible solution of \ref{pro:LocPop}. Then, it satisfies $\bar{x}_j\in\K$, for all $j=1,...,p$. Let $u_{ij}=\|\bar{x}_j-a_i\|_\tau$, based in (\ref{eq1}) and (\ref{eq:n1}),  $\|\bar{x}_j-a_i\|_\tau$ can be represented by
$$
\left\{
  \begin{array}{ll}
    v_{ijk}&=|\bar{x}_{jk}-a_{ik}|,\\
    v_{ijk}^r&=\zeta_{ijk}^su_{ij}^{r-s}, \\
    \dsum_{k=1}^d\zeta_{ijk}&=u_{ij}, \\
    \zeta&\geq0.
  \end{array}
\right.
$$
For $i=1,...,n,\;j=1,...,p$ and $k=1,...,d$, we denote by
$$\begin{array}{rl}
t_i=\dmin_{\ell}\|\bar{x}_{\ell}-a_i\|_\tau
&\quad\mbox{ and }\quad z_{ij}=\left\{\begin{array}{ll}
                1, & \mbox{if }\dmin_{l}\|\bar{x}_l-a_i\|_\tau=\|\bar{x}_j-a_i\|_\tau, \\
                0, & \mbox{otherwise.}
                \end{array},
         \right.
\end{array}$$

Observe that if it would exist $j'\in\{1,...,p\}$ such that $j'\neq j$ and $\dmin_{l}\|\bar{x}_l-a_i\|_\tau=\|\bar{x}_{j'}-a_i\|_\tau$ then we can choose arbitrarily  any of them, because a client can be assigned to only one facility.

These values clearly satisfy constraints (\ref{c:ti1})-(\ref{c:n1-4}), (\ref{c:domain-z}) and (\ref{c:domain-tvud}).

Besides, let $\sigma$ be the permutation of $(1,...,n)$ such that $t_{\sigma(1)}\geq...\geq t_{\sigma(n)}$. Take,
$$
w_{il}=\left\{
         \begin{array}{ll}
           1, & \mbox{if }i=\sigma(l), \\
           0, & \mbox{otherwize};
         \end{array}
       \right.
\mbox{ and } \theta_l=t_{\sigma(l)}.
$$

then the constraints (\ref{c:ti2})-(\ref{c:theta1}) are also satisfied. Clearly, $\sum_{\ell=1}^n \lambda_{\ell} \theta_{\ell}=\sum_{i=1}^n \lambda_i \tilde{f}_{(i)} (x)$.

Conversely, if $(\bar{x},\bar{z},\bar{u},\bar{v},\bar{\zeta},\bar{w},\bar{t},\bar{\theta})$ is a feasible solution of \ref{pro:locgen2} then, clearly $\bar{x}_j \in\K$ and $\bar{x}$ is a feasible point of \ref{pro:LocPop}.

Consider the continuous relaxation of problem (\ref{pro:locgen2}). Suppose that $\K$ satisfies Slater condition. Take $x_j$ for all $j=1,\ldots,p$, in the interior of $\K$. Set $\theta=4M+\frac{1}{\ell}$, $t_i=3M$ and $u_{ij}=2M$ for $\ell,i=1,...,n$ and $j=1,...,p$ with $M>>0$ large enough. Then, for any $z_{ij},w_{i\ell} \in [0,1]$ we get that the set of inequality constraints satisfies
$$
\left\{
  \begin{array}{ll}
    \theta_{\ell}-t_i +UB_i(1-w_{i\ell})>0, & i=1,\ldots,n, \;  \ell=1,\ldots,n,\\
    \theta_{\ell}> \theta_{\ell+1}, & \ell=1,\ldots,n, \\
    t_i-u_{ij}+UB_i(1-z_{ij})>0, &  \forall \; i=1,\ldots, n,\; j=1,\ldots,p,\\
    u_{ij}>\|x_j-a_i\|_\tau, &  \forall \; i=1,\ldots, n,\; j=1,\ldots,p.
  \end{array}
\right.
$$
This proves that the continuous relaxation of \ref{pro:locgen2} satisfies Slater condition.

Clearly, by the above arguments, optimal solutions and optimal values of both formulations coincide.
\qed
\end{proof}

\begin{ex}
\label{ex:ex2}
In the following example we have extracted the following $10$ points from the $50$-points data set in \cite{eilon-watson} to illustrate the applicability of the above formulation:\\
$(9.46, 9.36), (7.43, 1.61), (6.27, 3.66), (5.00, 9.00), (2.83, 9.88), (2.20, 1.12), (1.90, 8.35), (1.68, 6.45),$ \\
$ (1.24, 6.69), (0.75, 4.98).$

 For $p=3$, $\tau=\dfrac{7}{5}$ and $\lambda$-weights:
$$
2.25, 1.70, 1.14, 1.11, 1.06, 1.03, 1.01, 1.01, 1.00, 1.00,
$$
we get the solutions $x_1^*=(6.199838,1.580148)$, $x_2^*=(5.000041,9.360006)$, and $x_3^*=(1.440000,6.550015)$, with optimal objective value $f^*=30.1460$. Figure \ref{fig2} shows the demand points (filled dots), solutions (filled triangles) and the allocation of the demand points to the facilities (dashed lines).

\tikzset{mark options={mark size=1.25pt}}

\begin{figure}
\label{fig2}
\begin{center}
\begin{tikzpicture}

\begin{axis}

\addplot[
scatter,
only marks,
point meta=explicit symbolic,
scatter/classes={
a={mark=*,black},%
w={mark=triangle*,red},%
c={mark=o,blue}},
]
table[meta=label] {
x   y   label
9.460001	 9.359996 	 a
7.429997	 1.609995 	 a
6.269996	 3.659997 	 a
5.000001	 8.999998 	 a
2.829998	 9.880000 	 a
2.199999	 1.119997 	 a
1.900003	 8.350003 	 a
1.679995	 6.450000 	 a
1.239999	 6.690005 	 a
0.750001	 4.980004 	 a
6.199838	 1.580148 	 w
5.000041	 9.360006 	 w
1.440000	 6.550015 	 w
};
\end{axis}
\end{tikzpicture}
\begin{tikzpicture}

\begin{axis}

\addplot[
scatter,
only marks,
point meta=explicit symbolic,
scatter/classes={
a={mark=*,black},%
w={mark=triangle*,red},%
c={mark=o,blue}},
]
table[meta=label] {
x   y   label
9.460001	 9.359996 	 a
7.429997	 1.609995 	 a
6.269996	 3.659997 	 a
5.000001	 8.999998 	 a
2.829998	 9.880000 	 a
2.199999	 1.119997 	 a
1.900003	 8.350003 	 a
1.679995	 6.450000 	 a
1.239999	 6.690005 	 a
0.750001	 4.980004 	 a
6.199838	 1.580148 	 w
5.000041	 9.360006 	 w
1.440000	 6.550015 	 w
};
\addplot[mesh, dashed, black] coordinates {(9.460001,9.359996)  (5.000041,9.360006)};
\addplot[mesh, dashed, black] coordinates {(7.429997,1.609995)   (6.199838,1.580148)};
\addplot[mesh, dashed, black] coordinates {(6.269996,3.659997)   (6.199838,1.580148)};
\addplot[mesh, dashed, black] coordinates {(5.000001,8.999998)   (5.000041,9.360006)};
\addplot[mesh, dashed, black] coordinates {(2.829998,9.880000)   (5.000041,9.360006)};
\addplot[mesh, dashed, black] coordinates {(2.199999,1.119997)   (6.199838,1.580148)};
\addplot[mesh, dashed, black] coordinates {(1.900003,8.350003)   (1.440000,6.550015)};
\addplot[mesh, dashed, black] coordinates {(1.679995,6.450000)   (1.440000,6.550015)};
\addplot[mesh, dashed, black] coordinates {(1.239999,6.690005)   (1.440000,6.550015)};
\addplot[mesh, dashed, black] coordinates {(0.750001,4.980004)   (1.440000,6.550015)};

%
%
\end{axis}
\end{tikzpicture}

\caption{Points in Example \ref{ex:ex2} (filled circles), solutions (triangles) and allotation of demand points to facilities (dashed lines)}.
\end{center}
\end{figure}
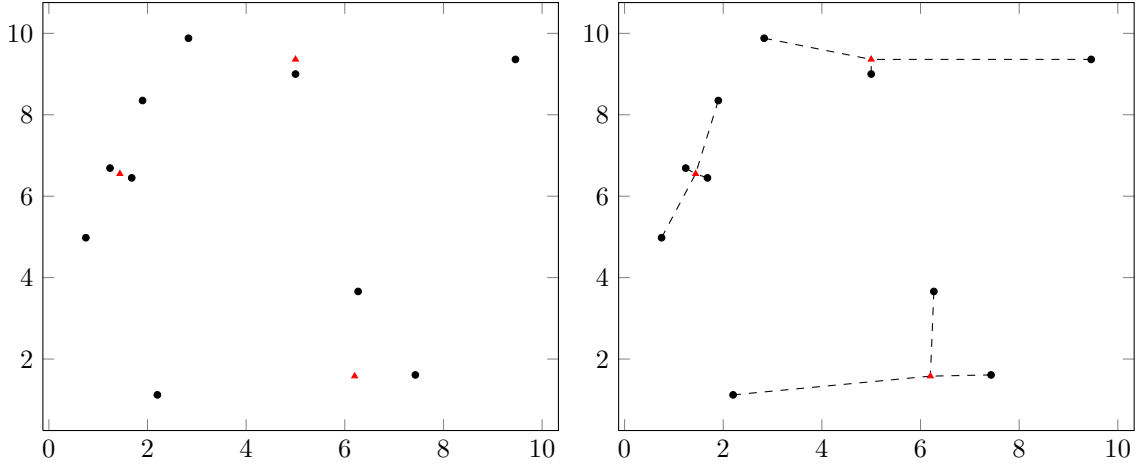

\end{ex} An interesting observation that follows from Problem (\ref{pro:locgen2}) is that the unconstrained version of the location problem  can be equivalently seen as a mixed integer second order cone program. Observe that the only nonlinear  constraints that appear are  (\ref{c:n1-3}). However, (\ref{c:n1-3}) can be written equivalently as a polynomial number of second order cone constraints, according to \cite{BPS14}.  This way, Problem (\ref{pro:locgen2}) becomes a  mixed integer nonlinear program with lineal objective function and only linear and second order cone constraints, although with two sets of binary variables, namely $w$ and $z$. Nevertheless, there are nowadays general purpose solvers, as Gurobi, Cplex or Xpress, that implements exact B\&B algorithms for this type of problems and that are rather efficient.

From the above observation   to solve Problem \ref{pro:locgen2} efficiently we have combined a branch-and-bound approach over a mixed integer nonlinear program. In our approach, we provide two types of lower bounds in the nodes of the branching tree: a continuous relaxation and a SPD relaxation based on a hierarchy of SDP ''\textit{a la Lasserre}''. Clearly, the first type of bounds are only possible if in each node of the tree the continuous relaxation of \ref{pro:locgen2} satisfies Slater condition. This is ensured by Theorem \ref{t:equi-sl}.

The consequence of the above transformation is that one can easily put this family of problems in commercial solvers and then get solutions without going to painful \textit{ad hoc} implementations that may be problem dependent. We illustrate this approach in our computational experiments in Section \ref{sec:tests}.

Finally, we conclude this section providing the second type of lower bounds, based on a SDP hierarchy,  that can be used to approximate to any degree of accuracy the solution of the problem as well as within the branch and bound framework at each node of the branching tree.

Let $\mathbf{y}=(y_\alpha)$ be a real sequence
indexed in the monomial basis $(x^\beta z^{\eta} u^\gamma v^\delta \zeta^\kappa w^\psi t^\tau \theta^\vartheta)$ of $\mathbb{R}[x,z,u,v,\zeta,w,t,\theta]$ (with $%
\alpha=(\beta,\eta,\gamma,\delta,\kappa,\tau,\psi,\vartheta)\in\mathbb{N}^{pd} \times \mathbb{N}^{np} \times \mathbb{N}^{np} \times \mathbb{N}^{npd} \times \mathbb{N}^{n^2}\times \mathbb{N}^{n}\times \mathbb{N}^{n} \times  \mathbb{N}^{npd}$). Denote by $nv_1=d+np(d+2)+n^2+2n+npd$ the number of variables in the extended formulation of the problem.

Let $h_{0}(\theta):= \dsum_{\ell=1}^{m}\lambda_{\ell}\theta_{\ell}$, and denote $\xi_j:=\lceil (\mathrm{deg}\, g_j)/2\rceil$ and $\nu_{j}:=\lceil(\mathrm{deg}%
\,h_{j})/2\rceil$, where $\{g_1,\ldots,g_{n_\mathbf{K}}\}$, and $\{h_1, \ldots, h_{nc1}\}$ are, respectively,  the polynomial constraints that define $\mathbf{K}$ and $\mathbf{\hat K}\setminus \mathbf{K}$ in \ref{pro:locgen2}. For $r\geq r_{0}:=%
\displaystyle\max \{\max_{k=1,\ldots,n_\mathbf{K}} \xi_k,$   $\displaystyle \max_{j=0,\ldots ,nc1}\nu_{j}\}$, introduce the hierarchy of semidefinite
programs:
\begin{equation*}\label{lower2}\tag{$\mathbf{Q1}_{r}$}
\begin{array}{lll}
\displaystyle\min_{\mathbf{y}} & \L_{\mathbf{y}}(p_{\lambda}) &  \\
\mathrm{s.t.} & \Mo_{r}(\mathbf{y}) & \succeq 0, \\
& \Mo_{r-\xi_k}(g_k,\mathbf{y}) & \succeq 0,\quad k=1,\ldots ,n_\mathbf{K}, \\
& \Mo_{r-\nu_{j}}(h_{j},\mathbf{y}) & \succeq 0,\quad j=1,\ldots ,nc1, \\
& y_0= 1,
\end{array}
\end{equation*}%
with optimal value denoted $\inf \mathbf{Q1}_{r}$ (and $\min \mathbf{Q1}_{r}$ if the infimum is attained). Next, based on proposition \ref{pr:lasserreconv} and \cite[Theorem 6.1]{lasserrebook} we can state the following result.

\begin{theorem}
Let $\mathbf{\hat K}\subset\mathbb{R}^{nv_1}$
(compact) be the feasible domain of Problem \ref{pro:locgen2}. Let $\inf \mathbf{Q1}_{r}$ be the optimal value of the semidefinite program \ref{lower2}.
Then, with the notation above:

\textrm{(a)} $\inf\mathbf{Q1}_r\uparrow \rho_{\lambda}$ as $r\to\infty$.

\textrm{(b)} Let $\mathbf{y}^r$ be an optimal solution of the SDP relaxation
 \ref{lower2}. If
\begin{equation*}  \label{finiteconv}
\mathrm{rank}\,\Mo_r(\mathbf{y}^r)\,=\,\mathrm{rank}\,\Mo_{r-r_0}(\mathbf{y}%
^r)\,=\,\varphi
\end{equation*}
then $\min\mathbf{Q1}_r=\rho_{\lambda}$ and one may extract $\varphi$ points $(x_1^*(k),\ldots,x_p^*(k),z^*(k),u^*(k),v^*(k),\zeta^*(k),w^*(k),t^*(k),\\\theta^*(k))_{k=1}^{\varphi}\subset\mathbf{\hat K}$, all global minimizers of the \ref{pro:locgen2} problem.
\end{theorem}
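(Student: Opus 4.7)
The plan is to recognize this as a direct application of the general Lasserre SDP hierarchy machinery (Proposition \ref{pr:lasserreconv}) together with the flat extension / rank-truncation extraction principle (Theorem 6.1 of \cite{lasserrebook}), once we have verified that the polynomial optimization problem encoding \ref{pro:locgen2} fits the hypotheses of that machinery. The objective $p_\lambda(\theta)=\sum_\ell \lambda_\ell \theta_\ell$ is linear, all constraints $g_k,h_j$ are polynomial (the nonlinear constraints $v^r\le \zeta^s u^{r-s}$ and the binary conditions $w_{i\ell}^2=w_{i\ell}$, $z_{ij}^2=z_{ij}$ are polynomial), and by Theorem \ref{t:equi-sl} we have $\rho_\lambda=\hat\rho_\lambda$, so it suffices to establish convergence to $\hat\rho_\lambda$.

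First I would verify the Archimedean property for $\mathbf{\hat K}$. The set $\mathbf{K}$ is Archimedean by assumption, and in particular the bound $\|x_j\|_2\le M$ is available. From this and the constraints of \ref{pro:locgen2} one derives explicit a~priori bounds on all auxiliary variables: the boxing constraints \eqref{c:n1-1}--\eqref{c:n1-2} together with $\|x_j\|_\infty\le M$ bound $v_{ijk}$; constraint \eqref{c:n1-4} combined with $\zeta_{ijk}\ge 0$ and the definition of $u_{ij}$ via \eqref{c:n1-3} bound $u_{ij}$ (indeed $u_{ij}\le UB_i$ may be added without loss); then \eqref{c:ti1}--\eqref{c:ti2} bound $t_i$ and $\theta_\ell$; and $w_{i\ell},z_{ij}\in[0,1]$ are automatically bounded. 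Hence we may add a redundant ball constraint of the form $N-\sum (\,\cdot\,)^2\ge 0$ aggregating all variables without changing the feasible set, which produces an Archimedean description of $\mathbf{\hat K}$ (see the remark after the Archimedean Property statement in Section \ref{sec:prelim}).

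With Archimedean property in hand, part (a) is immediate from Proposition \ref{pr:lasserreconv}: the hierarchy $\{\inf\mathbf{Q1}_r\}_{r\ge r_0}$ is monotone non-decreasing and converges to $\min_{(\,\cdot\,)\in\mathbf{\hat K}} p_\lambda=\hat\rho_\lambda=\rho_\lambda$, where the last equality is Theorem \ref{t:equi-sl}. No additional difficulty arises here since $p_\lambda$ is a polynomial (in fact linear), $\mathbf{\hat K}$ is a basic closed semi-algebraic set, and all degree conditions $r\ge r_0$ are respected in the definition of \ref{lower2}.

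For part (b), I would invoke the finite-convergence / extraction result (Theorem 6.1 in \cite{lasserrebook}, based on the Curto--Fialkow flat extension theorem): if an optimal moment sequence $\mathbf{y}^r$ of \ref{lower2} satisfies the flatness condition $\mathrm{rank}\,\Mo_r(\mathbf{y}^r)=\mathrm{rank}\,\Mo_{r-r_0}(\mathbf{y}^r)=\varphi$, then $\mathbf{y}^r$ is the sequence of moments of a finitely atomic measure $\mu^*=\sum_{k=1}^\varphi \gamma_k \delta_{\xi^{(k)}}$ supported on $\varphi$ points of $\mathbf{\hat K}$, and each atom $\xi^{(k)}=(x_1^*(k),\ldots,\theta^*(k))$ is a global minimizer of \ref{pro:locgen2}; therefore $\min\mathbf{Q1}_r=\hat\rho_\lambda=\rho_\lambda$. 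The atoms themselves are recovered by the standard procedure (eigenvalue method on the multiplication operators in $\R[x,z,u,v,\zeta,w,t,\theta]/I$ where $I$ is the vanishing ideal associated to the moment matrix). The main obstacle in making this plan rigorous is the first step, namely producing an explicit Archimedean description of $\mathbf{\hat K}$; once that is done, the rest is a direct citation of the hierarchy and extraction theorems.
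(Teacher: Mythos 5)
Your proposal follows essentially the same route as the paper, which offers no separate argument beyond invoking Proposition \ref{pr:lasserreconv} for the monotone convergence in (a) and the flat-extension/extraction result of \cite[Theorem 6.1]{lasserrebook} for (b), combined with the equivalence $\rho_\lambda=\hat\rho_\lambda$ from Theorem \ref{t:equi-sl}; your additional verification of the Archimedean property is a reasonable filling-in of a detail the paper leaves implicit. One small caveat: constraints \eqref{c:ti2}--\eqref{c:ti1} only bound $t_i$ and $u_{ij}$ from above in terms of $\theta_\ell$, which itself has no upper bound in the formulation, so your boundedness chain is not quite as stated --- but this is harmless here because the theorem explicitly assumes $\mathbf{\hat K}$ compact, which licenses adding the redundant ball constraint you need.
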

\label{}

\section{Computational Experiments \label{sec:tests}}

We have performed a series of computational experiments to show the efficiency of all the proposed formulations and approaches.
The (mixed integer) SOCP formulations have been coded in Gurobi 5.6 and executed in a PC with an Intel Core i7 processor at 2x 2.40 GHz
 and 4 GB of RAM. We have applied our formulations to the well-known $50$-points data set in Eilon et. al \cite{eilon-watson}
 by considering different number of facilities, different norms and weights. In particular   we have considered the number of  facilities to be located, $p$, ranging in $\{2,5, 10, 15, 30\}$ and $\tau$ (the norm) in $\{\frac{3}{2}, 2, 3\}$. We have run both the non-interchangeable \ref{pro:Multi1} model and the single-allocation model \ref{pro:locgen2}. For the non-interchangeable model we have considered random $\lambda$ and $\mu$ weights (fulfilling the conditions described in (\ref{lambdas})). Table \ref{tableNINT} reports our results on these experiments.  There, we report the average CPU times of $5$ different random instances for each problem (when fixing $p$ and $\tau$).
~\\

\tikzset{mark options={mark size=1.25pt}}

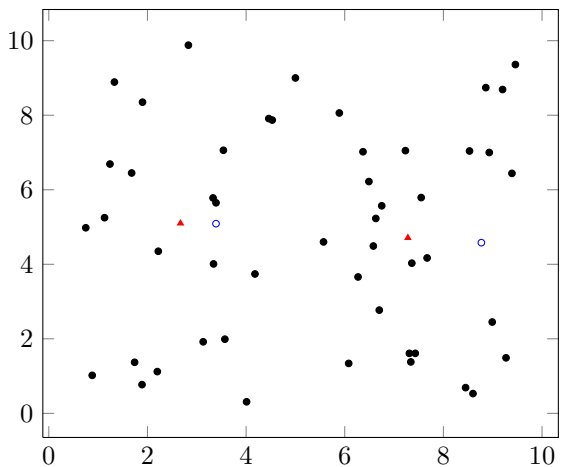
\begin{figure}
\begin{center}
\begin{tikzpicture}

\begin{axis}

\addplot[
scatter,
only marks,
point meta=explicit symbolic,
scatter/classes={
a={mark=*,black},%
w={mark=triangle*,red},%
c={mark=o,blue}},
]
table[meta=label] {
x   y   label
9.46	9.36	a
9.39	6.44	a
9.27	1.49	a
9.20	8.69	a
8.99	2.45	a
8.93	7.00	a
8.86	8.74	a
8.60	0.53	a
8.53	7.04	a
8.45	0.69	a
7.67	4.17	a
7.55	5.79	a
7.43	1.61	a
7.36	4.03	a
7.34	1.38	a
7.31	1.61	a
7.23	7.05	a
6.75	5.57	a
6.70	2.77	a
6.63	5.23	a
6.58	4.49	a
6.49	6.22	a
6.37	7.02	a
6.27	3.66	a
6.08	1.34	a
5.89	8.06	a
5.57	4.60	a
5.00	9.00	a
4.53	7.87	a
4.46	7.91	a
4.18	3.74	a
4.01	0.31	a
3.57	1.99	a
3.54	7.06	a
3.39	5.65	a
3.34	4.01	a
3.33	5.78	a
3.13	1.92	a
2.83	9.88	a
2.22	4.35	a
2.20	1.12	a
1.90	8.35	a
1.89	0.77	a
1.74	1.37	a
1.68	6.45	a
1.33	8.89	a
1.24	6.69	a
1.13	5.25	a
0.88	1.02	a
0.75	4.98	a
7.28    4.71    w
2.67    5.10    w
8.77    4.58    c
3.39    5.09    c
};                  \end{axis}
\end{tikzpicture}
\caption{Points from \cite{eilon-watson} (filled circles) and solutions of $2$-median problem (triangles) and $2$-center problem (empty circles) \label{fi:2med2cen}}.
\end{center}
\end{figure}

\begin{table}[htbp]
  \centering
  \caption{CPU Running times for Non-interchangeable multifacility problem for the Eilon-Watson-Christofides $50$-point data set.}
    \begin{tabular}{c|ccc}
   \backslashbox{$p$}{$\tau$}  & 1.5   & 2     & 3 \\\hline
    2     & 2.5095 & 2.1157 & 3.7470 \\
    5     & 12.7794 & 6.5161 & 9.8130 \\
    10    & 29.1873 & 10.5726 & 19.5455 \\
    15    & 49.4854 & 19.1129 & 40.4506 \\
    30    & 148.7449 & 40.5635 & 85.5676 \\
    \end{tabular}%
  \label{tableNINT}%
\end{table}%
~\\
For general single-allocation multifacility problems, we have consider three of the classical multifacility models that fit the ordered median formulation when particular $\lambda$ weights are chosen: $p$-median problem, $p$-center problem and $p$-$k$-centrum problem (with $k=25$).  In Table \ref{compresults} we report the overall CPU times needed to solve the problems and the optimal solutions provided by Gurobi ($f^*$).  Figure  \ref{fi:2med2cen} depicts the solutions of the $2$-median and $2$-center problems based on the $50$-points data set in Eilon et. al \cite{eilon-watson}.

We observe that the bottleneck for solving \eqref{pro:locgen2} is, apart from its second order cone constraints, the existence of integer variables ($z$ and $w$). In order to ease the resolution some improvements can done via some preprocessing and fixing binary variables based on a geometric  branch and bound phase. The large number of nodes  of the search tree, in some cases, made  that optimality could not be ensured in some instances. This preprocessing is effective  whenever the dimension of the space of variables in low, namely $d=2,3$.

\begin{landscape}

\begin{table}
\label{compresults}
\begin{center}
\begin{tabular}{|r|r|r|r|r|r|r|r|r|r|r|r|r|r|r|}
\hline
\texttt{Problem} & $p$     & $\tau$   & \texttt{CPUTime} & $f^*$     & \texttt{Problem} & $p$     & $\tau$   & \texttt{CPUTime} & $f^*$     &\texttt{Problem} & $p$     & $\tau$   & \texttt{CPUTime} & $f^*$ \bigstrut\\
\hline
\multicolumn{1}{|c|}{\multirow{15}[30]{*}{\rotatebox[origin=c]{90}{$p$-median}}} & \multirow{3}{*}{2}    & 1.5   & 22.31 & 150.955 & \multicolumn{1}{c|}{\multirow{15}[30]{*}{\rotatebox[origin=c]{90}{$p$-center}}} & \multirow{3}{*}{2}     & 1.5   & 1.03  & 4.9452 & \multicolumn{1}{c|}{\multirow{15}[30]{*}{\rotatebox[origin=c]{90}{$p$-25-centrum}}} & \multirow{3}{*}{2}     & 1.5   & 10.08 & 100.8474 \bigstrut\\
\cline{3-5}\cline{8-10}\cline{13-15}\multicolumn{1}{|c|}{} &      & 2     & 1.13  & 135.5222 & \multicolumn{1}{c|}{} &      & 2     & 0.28  & 4.8209 & \multicolumn{1}{c|}{} &     & 2     & 0.38 & 95.0892 \bigstrut\\
\cline{3-5}\cline{8-10}\cline{13-15}\multicolumn{1}{|c|}{} &      & 3     & 23.68 & 130.856 & \multicolumn{1}{c|}{} &      & 3     & 13.51 & 4.788 & \multicolumn{1}{c|}{} &     & 3     & 139.03 & 89.0238 \bigstrut\\
\cline{2-5}\cline{7-10}\cline{12-15}\multicolumn{1}{|c|}{} & \multirow{3}{*}{5}     & 1.5   & 55.28 & 78.6074 & \multicolumn{1}{c|}{} & \multirow{3}{*}{5}     & 1.5   & 3.73  & 2.8831 & \multicolumn{1}{c|}{} & \multirow{3}{*}{5}     & 1.5   & 33.09 & 53.4995 \bigstrut\\
\cline{3-5}\cline{8-10}\cline{13-15}\multicolumn{1}{|c|}{} &      & 2     & 12.49 & 72.2369 & \multicolumn{1}{c|}{} &     & 2     & 5.37  & 2.661 & \multicolumn{1}{c|}{} &      & 2     & 7.61 & 49.6932 \bigstrut\\
\cline{3-5}\cline{8-10}\cline{13-15}\multicolumn{1}{|c|}{} &      & 3     & 125.1 & 68.1791 & \multicolumn{1}{c|}{} &      & 3     & 2.87  & 2.5094 & \multicolumn{1}{c|}{} &      & 3     & 18.23 & 46.9844 \bigstrut\\
\cline{2-5}\cline{7-10}\cline{12-15}\multicolumn{1}{|c|}{} & \multirow{3}{*}{10}    & 1.5   & 5.36  & 45.0525 & \multicolumn{1}{c|}{} & \multirow{3}{*}{10}     & 1.5   & 2.66  & 1.6929 & \multicolumn{1}{c|}{} & \multirow{3}{*}{10}     & 1.5   & 68.36 & 30.7137 \bigstrut\\
\cline{3-5}\cline{8-10}\cline{13-15}\multicolumn{1}{|c|}{} &     & 2     & 2.31  & 41.6851 & \multicolumn{1}{c|}{} &     & 2     & 5.3   & 1.6113 & \multicolumn{1}{c|}{} &     & 2     & 17.93 & 28.9017 \bigstrut\\
\cline{3-5}\cline{8-10}\cline{13-15}\multicolumn{1}{|c|}{} &     & 3     & 4.76  & 39.7222 & \multicolumn{1}{c|}{} &     & 3     & 55.76 & 1.595 & \multicolumn{1}{c|}{} &     & 3     & 225.64 & 27.5376 \bigstrut\\
\cline{2-5}\cline{7-10}\cline{12-15}\multicolumn{1}{|c|}{} & \multirow{3}{*}{15}     & 1.5   & 6.7   & 30.0543 & \multicolumn{1}{c|}{} & \multirow{3}{*}{15}    & 1.5   & 9.44  & 1.1139 & \multicolumn{1}{c|}{} & \multirow{3}{*}{15}    & 1.5   & 49.92 & 22.4165 \bigstrut\\
\cline{3-5}\cline{8-10}\cline{13-15}\multicolumn{1}{|c|}{} &     & 2     & 43.91 & 27.6282 & \multicolumn{1}{c|}{} &     & 2     & 0.62  & 1.0717 & \multicolumn{1}{c|}{} &     & 2     & 11.26 & 20.6536 \bigstrut\\
\cline{3-5}\cline{8-10}\cline{13-15}\multicolumn{1}{|c|}{} &    & 3     & 150.99 & 26.6047 & \multicolumn{1}{c|}{} &     & 3     & 50.08 & 1.053 & \multicolumn{1}{c|}{} &     & 3     & 244.59 & 20.8544 \bigstrut\\
\cline{2-5}\cline{7-10}\cline{12-15}\multicolumn{1}{|c|}{} & \multirow{3}{*}{30}    & 1.5   & 14.45 & 9.9488 & \multicolumn{1}{c|}{} & \multirow{3}{*}{30}    & 1.5   & 74.43 & 1.008 & \multicolumn{1}{c|}{} & \multirow{3}{*}{30}    & 1.5   & 202.54 & 9.0806 \bigstrut\\
\cline{3-5}\cline{8-10}\cline{13-15}\multicolumn{1}{|c|}{} &     & 2     & 4.81  & 8.7963 & \multicolumn{1}{c|}{} &     & 2     & 1.53  & 0.9192 & \multicolumn{1}{c|}{} &     & 2     & 5.29 & 8.521662 \bigstrut\\
\cline{3-5}\cline{8-10}\cline{13-15}\multicolumn{1}{|c|}{} &     & 3     & 198.78 & 8.6995 & \multicolumn{1}{c|}{} &    & 3     & 57.37 & 0.8508 & \multicolumn{1}{c|}{} &    & 3     & 287.90 & 8.001695 \bigstrut\\
\hline
\end{tabular}%
\end{center}
\caption{Computational results for $p$-median, $p$-center and $p$-25-centrum problems for the $50$-points data set in \cite{eilon-watson}.}
\end{table}
\end{landscape}
\section{Reduction of dimensionality of the SDP relaxations for multifacility location problems}
One of the drawbacks of using hierarchies of SDP problems to approximate the solution of our original multifacility location problem is the dimension of the SDP objects that must be used when the relaxation order increases. The size of the matrices to be considered in the SDP problems that have to be solved grows exponentially with the relaxation order. For this reason, it is important to find low rank solutions, that is to identify properties of solutions that ensure significant reduction in the sizes of the problems to be solved. In the following, we address two types of properties that ensure such dimensionality reduction: sparsity and symmetry.

\subsection{Sparsity\label{sec:sparse}}

At times the number of variables that appear in  the polynomial constraints of a polynomial optimization problem can be separated in blocks so that only some of them appear together in some constraints. In those cases  the SDP variables to be used can be simplified  and thus, the sizes of the moment and localizing matrices are dramatically reduced.

The application of this result requires the so call running intersection property \cite{sparse}.

Let $\mathbf{y}=(y_\alpha)$ be a real sequence indexed in the monomial basis $\Upsilon^\alpha=(x^\beta z^\eta     u ^\gamma v^\delta \zeta^\kappa w^\psi t^\tau, \theta^\vartheta)$ of $\mathbb{R}[x,z,u,v,\zeta, w,t, \theta]$ (with $%
\alpha=(\beta, \eta, \gamma, \delta, \kappa, \psi, \tau,\vartheta)\in\mathbb{N}^{pd}\times\N^{np}\times\N^{np}\times\N^{npd}\times\N^{npd}\times\N^{n^2} \times \N^n \times \N^n$).

Let $h_{0}(\theta):=\dsum_{\ell =1}^n \lambda_{\ell} \theta_{\ell}$, and denote $\xi_j:=\lceil (\mathrm{deg}\, g_j)/2\rceil$, $j=1,\ldots,n_\K$, $\nu^{j}_\ell:=\lceil(\mathrm{deg}%
\,h^{j})/2\rceil$, $j=0,\ldots,10$;  where $\{g_1,\ldots,g_{n_\K}\}$ are the polynomial constraints that define $\mathbf{K}$ and $\{h^1, \ldots, h^{10}\}$   are, respectively,   the polynomial constraints  (\ref{c:ti2}) - (\ref{c:n1-7}) in \ref{pro:locgen2}.

Let us denote by $I^x=\{(j,k):j=1,\dots,p,\; k=1,\ldots,d\}$, $I^z=\{(i,j): i=1,\ldots,n,\; j=1,\ldots,p\}$, $I^t=\{i: i=1,\ldots,n\}$, $I^u=\{(i,j): i=1,\ldots,n,\; j=1,\ldots,p\}$, $I^v=I^{\zeta}=\{(i,j,k): i=1,\ldots,n,\; j=1,\dots,p,\; k=1,\ldots,d\}$ and $I^w=\{ (i,\ell): i=1,\ldots,n,\; i=1,\ldots,n\}$. With the above notation we can represent the index multiset that defines the set of variables of Problem  \ref{pro:locgen2} as $I=I^x\cup I^z\cup I^u \cup I^v \cup I^\zeta \cup I^w \cup I^t \cup I^\theta$. Next, for a given$j$, we shall refer to
$I^x(j)=\{(j,k):  k=1,\ldots,d\}$,  $I^z(j)=I^u(j)=\{(i',j):i'=1,\ldots,n \}$, $I^v(j)=I^\zeta(j)=\{(i',j,k):  i'=1,\ldots,n,\; k=1,\ldots,d\}$;  and for a position $\ell=1,\ldots,n-1$,  $I^w(\ell)=\{(i',\ell): i'=1,\ldots,n\}$ and $I^\theta(\ell)= \{\ell, \ell+1\}$.

Now, with $x(I^x)$, $z(I^z(j))$,  $u(I^u(j))$, $v(I^v(j))$, $\zeta(I^\zeta(j))$, $w(I^w)$, $t(I^t)$ and $\theta(I^\theta(\ell))$ we refer, respectively, to the monomials $x$, $z$,  $u$, $v$, $\zeta$, $w$, $t$ and $\theta$  indexed only by subsets of elements in the former sets of indices. Analogously, by $\Upsilon (\tilde I)$, we refer to the monomials extracted from $\Upsilon$ only indexed by the elements in the set $\tilde I$.  (Recall that $\Upsilon=(x,z,u,v,\zeta, w,t, \theta)$ is the set of indeterminates, as defined in Section \ref{s:locomf}.)

Then, for $g_k$, with $k=1,\ldots,n_\K$, let $\Mo_r(\y;I^x)$ (respectively $\Mo_r(g_{k}\y;I^x)$) be the moment (resp. localizing) submatrix obtained from $\Mo_r(\y)$ (resp. $\Mo_r(g_k\y)$) retaining only those rows and columns indexed in the canonical basis of $\mathbb{R}[x(I^x)]$ (resp. $\mathbb{R}[x(I^x)]$). Analogously, for $h^s$, $j=1,\ldots,10$ as defined in (\ref{c:ti2}) - (\ref{c:n1-7}) , respectively, let $\Mo_r(\y;\tilde I)$ (respectively $\Mo_r(h^{j}\y;\tilde I)$,  be the moment (resp. localizing) submatrix obtained from $\Mo_r(\y)$ (resp. $\Mo_r(h^j\y)$) retaining only those rows and columns indexed in the canonical basis of
$\mathbb{R}[\Upsilon(\tilde I))]$.

Let $\tilde{I}(0)=I^x\cup I^\theta \cup  I^t$  and
$\tilde{I}(j)=I^x(j)\cup I^z(j)\cup I^u(j)\cup I^v(j) \cup I^\zeta(j)\cup I^t$ for $j=1,\ldots,p$ and $\tilde{I}(p+\ell) = I^w(\ell) \cup I^\theta(\ell)$.

Observe that
\begin{equation}
\tilde{I}(j+1) \cap \bigcup_{j'\le j}  \tilde{I}(j')\subseteq \tilde{ I}(0), \quad \forall j=1, \ldots, p+n-1.
\end{equation}

The above subdivision of variables allows us to partition the polynomial constraints of \ref{pro:locgen2} (except those representing binary variables)  into $ p + n+1$ groups, ${\rm F}(j)$ and ${\rm F}(p+\ell)$, one for each $\tilde{I}(j)$ and each $\tilde{I}(p+\ell)$ for $j=0, \ldots, p$ and $\ell=1, \ldots, n$  so that within each group the constraints only depend of the variables in $\tilde{I}(j)$ for $j=0,\ldots,p$ and $\tilde{I}(p+\ell)$ for $\ell=1,\ldots,n$. Specifically, let
$$ F(0)=\{g_r(x)\ge 0: r=1,\ldots,n_{\K}\},$$
and for fixed  $j=1, \ldots, p$:
\begin{equation} \hspace*{-2cm} (F(j))  \left\{ \begin{array}{rll}
 h_{j}^3&:=u_{ij}\le t_i+UB_i(1-z_{ij}), &  i=1,\ldots, n,\\
 h_{j}^4&:=v_{ijk}-x_{jk}+a_{ik}\ge 0,& i=1,\ldots,n, k=1, \ldots, d,\\
 h_{j}^5&:=v_{ijk}+x_{jk}-a_{ik}\ge 0,& i=1,\ldots,n,\; k=1,..,d,    \\
 h_{j}^6&:=v_{ijk}^r\leq \zeta_{ijk}^{s}u_{ij}^{r-s},& i=1,\ldots,n,\; k=1,\ldots,d,  \\
 h_{j}^7&:=\sum_{k=1}^d \zeta_{ijk}\le u_{ij},& i=1,\ldots,n. \\
\end{array} \right.  \label{ijl-region}
\end{equation}
Next, for fixed  $\ell=1, \ldots, n-1$:
\begin{equation}
\hspace*{-3cm} (F(p+\ell))  \left\{\begin{array}{rll}
h_{\ell}^1&:=t_i\le \theta_{\ell} +UB_i(1-w_{i\ell}), & i=1,\ldots,n, \\
 h_{\ell}^2&:=\theta_{\ell}\ge \theta_{\ell+1}, &
\end{array} \right.  \label{ijl-region2}
\end{equation}
and
\begin{equation}
\hspace*{-3cm} (F(p+n)) \qquad  h^1_n:= t_i\le \theta_n+UB_i(1-w_{in}), \quad i=1,\ldots,n. \label{ijl-region3}
\end{equation}

For $r\geq \max\{ r_{0},\nu_0\}$ where
$r_0:=\displaystyle \max_{k=1,\ldots,\ell} \xi_k$  and $\nu_0:=\displaystyle \ \max_{j=0,\ldots ,10}\nu^{j}_\ell$, we introduce the following hierarchy of semidefinite programs:
\begin{equation*}\label{lower0}\tag{$\mathbf{Q1}_{r}^{\rm sp}$}
{\small \begin{array}{lll}
\displaystyle\inf_{\mathbf{y}} & \L_{\y}(\dsum_{\ell=1}^n\lambda_{\ell}\theta_{\ell}) &  \\
\mathrm{s.t.} & \Mo_{r}(\y;\tilde{I}(0)) & \succeq 0, \\
& \Mo_{r-\xi_k}(g_k\y;\tilde{I}(0)) & \succeq 0,\; k=1,\ldots ,n_\K, \\
& \Mo_r(\y;\tilde{I}(j))& \succeq 0,\;  j=1,\dots,p\\
& \Mo_r(\y;\tilde{I}(p+\ell))& \succeq 0,\; \ell=1,\ldots ,n-1,\\
& \Mo_{r-\nu_{\ell} ^1}(h_j^{1}\y ;\tilde{I}(p+\ell))& \succeq 0,\; \ell=1,\dots,n,\\
& \Mo_{r-\nu_{\ell} ^2}(h_j^{2}\y ;\tilde{I}(p+\ell))& \succeq 0,\; \ell=1,\dots,n-1,\\
& \Mo_{r-\nu_{j} ^3}(h_j^{3}\y ;\tilde{I}(j))& \succeq 0,\; j=1,\dots,p,\\
& \Mo_{r-\nu_{j} ^4}(h_j^{4}\y ;\tilde{I}(j))& \succeq 0,\; j=1,\dots,p,\\
& \Mo_{r-\nu_{j} ^5}(h_j^{5}\y ;\tilde{I}(j))& \succeq 0,\; j=1,\dots,p,\\
& \Mo_{r-\nu_{j} ^6}(h_j^{6}\y ;\tilde{I}(j))& \succeq 0,\; j=1,\dots,p,\\
& \Mo_{r-\nu_{j} ^7}(h_j^{7}\y ;\tilde{I}(j))& \succeq 0,\; j=1,\dots,p,\\
& \L_y(\dsum_{j=1}^p z_{ij}-1)& = 0, \;   i=1,\dots,n,\\
& \L_y(\dsum_{i=1}^n w_{i\ell}-1)& = 0, \;   \ell=1,\dots,n,\\
& \L_y(\dsum_{\ell=1}^n w_{i\ell}-1)& = 0, \; i=1,\ldots,n,\\
& \L_y(w_{i\ell}^2-w_{i\ell})& =0, \; i,\ell=1,\ldots,n, \\
& \L_y(z_{ij}^2-z_{ij})& =0, \; i=1,\ldots,n,\; j=1,\dots,p,\\
\end{array}%
}
\end{equation*}%
with optimal value denoted $\inf \mathbf{Q1}^{\rm sp}_{r}$.

\begin{theorem} Let $\mathbf{\overline K}\subset\mathbb{R}^{nv_1}$  be the feasible domain of \ref{pro:locgen2}.
Then, with the notation above:

\textrm{(a)} $\inf \mathbf{Q1}^{\rm sp}_{r}\uparrow \rho_{\lambda}$ as $r\to\infty$.

\textrm{(b)} Let $\mathbf{y}^r,$ be an optimal solution of the SDP relaxation \ref{lower0}.  If
\begin{eqnarray}  \label{finiteconv1}
\mathrm{rank}\,\Mo_r(\mathbf{y}^r;I^x \cap I^t \cap I^\theta)&= & \mathrm{rank}\,\Mo_{r-r_0}(\mathbf{y}%
^r; I^x \cap I^t \cap I^\theta)\nonumber\\ {\small \hspace*{-0.75cm}
\mathrm{rank}\,\Mo_r(\mathbf{y}^r;\tilde{I}(j))} & = & {\small \mathrm{rank}\,\Mo_{r-\nu_0}(\mathbf{y}%
^r;\tilde{I}(j)) \; j=1,\ldots,p \label{finiteconv2}}
\\ {\small \hspace*{-0.75cm}
\mathrm{rank}\,\Mo_r(\mathbf{y}^r;\tilde{I}(p+\ell))} & = & {\small \mathrm{rank}\,\Mo_{r-\nu_0}(\mathbf{y}%
^r;\tilde{I}(p+\ell)) \; \ell =1,\ldots,n\label{finiteconv3}}
\end{eqnarray}
and if $\mathrm{rank}(\Mo_r(\y^r; I^t))= \mathrm{rank}(\Mo_r(\y^r; I^\theta (\ell) \cap I^\theta(\ell+1))) = \mathrm{rank}(\Mo_r(\y^r; I^t \cap I^x(j))) = \mathrm{rank}(\Mo_r(\y^r; I^\theta(\ell)))= 1$ for all $j=1, \ldots, p$, $\ell = 1, \ldots, n-1$.

Moreover, let $\Delta_{j,\ell}:=\{(x(I^x(j))^*,z(I^z(j))^*, u(I^u(j))^* ,v(I^v(j))^*, \zeta(I^\zeta(j))^*, w(I^w(\ell))^*, t(I^t)^*, \theta(I^\theta(\ell))^*\}$ be the set of solutions obtained by the application of the condition (\ref{finiteconv2}) and (\ref{finiteconv3}). Then, every $(x^*,z^*,u^*,v^*,\zeta^*,w^*,t^*,\theta^*)$ such that $(x^*_{jk},z^*_{ij},u^*_{ij},v^*_{ijk}, \zeta_{ijk}^*, w_{i\ell}, t_i^*, \theta^*_\ell)$ has indices in $\tilde{I}(j)$, for $j=1, \ldots, n+p$, is an optimal solution of Problem $\mrf_{\lambda}$.
\end{theorem}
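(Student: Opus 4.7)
The plan is to reduce part (a) to the sparse moment/SOS convergence theorem of \cite{sparse,waki} and to reduce part (b) to a block-wise application of the Curto--Fialkow flat extension that underlies Proposition \ref{pr:lasserreconv}. Two structural prerequisites must be checked first: the running intersection property (RIP) for the index decomposition $\{\tilde I(j)\}_{j=0}^{p+n}$, and the Archimedean property on every block.

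For (a), I would begin by observing that the inclusion $\tilde I(j+1)\cap \bigcup_{j'\le j}\tilde I(j')\subseteq \tilde I(0)$ stated in the excerpt is, together with the convention of placing $\tilde I(0)$ as the common "backbone" block, exactly the RIP for the covering $\{\tilde I(j)\}_{j=0}^{p+n}$. Next, the Archimedean property of $\mathbf{K}$ yields the a priori bounds $\|x_j\|_2\le M$; the auxiliary variables are then bounded explicitly through the problem data (each $u_{ij}$, $t_i$, $\theta_\ell$, $v_{ijk}$, $\zeta_{ijk}$ is dominated by a constant multiple of $\max_i UB_i$, and $z_{ij},w_{i\ell}\in\{0,1\}$). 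Adjoining to each $F(j)$ the redundant ball constraint $N-\sum_{\alpha\in\tilde I(j)}\Upsilon_\alpha^2\ge 0$ for a large $N$ makes every block quadratic module Archimedean without altering the feasible region. With RIP and Archimedeanity in hand, the sparse SOS/moment hierarchy theorem of Waki et al.\ delivers the monotone non-decreasing convergence $\inf \mathbf{Q1}^{\rm sp}_r\uparrow \rho_\lambda$.

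For (b), each block-wise rank stabilization in \eqref{finiteconv2}--\eqref{finiteconv3} triggers the flat extension theorem applied to $\Mo_r(\y^r;\tilde I(j))$: the submoment matrix is then the moment matrix of an atomic measure $\mu_j$ supported on $\mathrm{rank}\,\Mo_r(\y^r;\tilde I(j))$ feasible points of the polynomial system $F(j)$, from which one extracts the tuples forming $\Delta_{j,\ell}$. The additional rank-$1$ hypotheses on the overlap submoment matrices ($I^t$, $I^\theta(\ell)\cap I^\theta(\ell+1)$, $I^t\cap I^x(j)$, $I^\theta(\ell)$) force the marginals of the $\mu_j$'s on the shared coordinates to be Dirac measures, hence any two block extractions coincide on their common variables. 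Consequently any combination $(x^*,z^*,u^*,v^*,\zeta^*,w^*,t^*,\theta^*)$ assembled by picking one tuple from each $\Delta_{j,\ell}$ is a well-defined feasible point of \ref{pro:locgen2}; since $\L_{\y^r}(h_0)=\rho_\lambda$ by part (a) and $\mu_j$ is supported on minimizers of the restricted problem, every such combined point is a global optimum.

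The main obstacle I anticipate is the bookkeeping needed to certify Archimedeanity block by block while preserving RIP: the compactness polynomial must be introduced in variables of $\tilde I(j)$ only, and one must argue that these localized enlargements still jointly describe $\mathbf{\hat K}$. A secondary delicate point is verifying that the rank-$1$ overlap conditions are sufficient (not just necessary) to glue the block extractions; here the binary equality constraints $\L_\y(z_{ij}^2-z_{ij})=\L_\y(w_{i\ell}^2-w_{i\ell})=0$ together with the partition constraints $\L_\y(\sum_j z_{ij}-1)=\L_\y(\sum_i w_{i\ell}-1)=\L_\y(\sum_\ell w_{i\ell}-1)=0$ force the atoms of the measures to give $0/1$ values to $z,w$, so that the allocation and the permutation encoded by the extracted tuples are mutually consistent across blocks.
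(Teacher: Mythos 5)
Your proposal follows essentially the same route as the paper: verify the running intersection property for the decomposition $\{\tilde{I}(j)\}_{j=0}^{p+n}$ (with $\tilde{I}(0)$ as the common backbone) and then invoke the sparse moment-relaxation convergence results — the paper cites \cite[Theorem 3.2]{sparse} together with \cite[Theorem 9]{jibetean}, while you appeal to the Waki et al.\ version of the same theorem. Your additional block-wise Archimedeanity bookkeeping and the flat-extension/rank-one gluing argument for part (b) simply spell out what the paper delegates to those citations, so the approaches coincide in substance.
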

\begin{proof}
The convergence of the semidefinite relaxation \ref{lower0} was proved by Jibetean and De Klerk \cite[Theorem 9]{jibetean} for a general rational function over a basic, closed semi-algebraic set that satisfies Archimedean Property. Here, we use that result applied to Problem  \ref{pro:locgen2}. Moreover, the index set of the indeterminates in the feasible set that generate localizing constraints, namely constraints (\ref{c:ti2}) - (\ref{c:n1-7}) admits the decomposition  $\tilde{I}(0), \tilde{I}(1), \ldots, \tilde{I}(p+n)$  that satisfies the running intersection property \cite{sparse}.

Indeed,
the index sets $I=\{1, \ldots, nv_1\}$ and $J=\{1, \ldots, n_K+nc_1\}$ are partitioned into  sets $\{\tilde{I}(j)\}_{j=0}^p\cup \{\tilde{I}(p+\ell)\}_{\ell=1}^n$ and $\{F(j)\}_{j=0}^p\cup \{F(p+\ell)\}_{\ell=1}^n$, respectively, satisfying:
\begin{enumerate}
\item $\{F(\cdot)\}$ are disjoint sets for all $j$ and $\ell$.
\item For every $j =0,\ldots,p$, the constraints in the set $F(j)$ only involve variables in the set $\tilde{I}(j)$ and for every $\ell =1,\ldots,n$, the constraints in the set $F(p+\ell)$ only involve variables in the set $\tilde{I}(p+\ell)$.
\item The objective function $f$ can be written as $f=\dsum_{\ell=1}^n \lambda_{\ell} \theta_{\ell}$ where $\lambda_{\ell} \theta_{\ell} \in \R[\Upsilon(\tilde{I}(p+\ell)]$ for $\ell=1, \ldots, n$.
\item  The decomposition of the index set of variables satisfies:
\begin{itemize}
\item $\tilde{I}(j) \cap \tilde{I}(j')= I^t$ for $j, j'=1, \ldots, p$ with $j\neq j'$.
\item $\tilde{I}(j) \cap \tilde{I}(p+\ell)= \emptyset$ for $j,1, \ldots, p$, $\ell=1, \ldots, n$.
\item $\tilde{I}(p+\ell) \cap \tilde{I}(p+\ell')= \emptyset$ for $\ell, \ell'=1, \ldots, p$ with $\ell \neq \ell'-1, \ell', \ell'+1$.
\item $\tilde{I}(p+\ell) \cap \tilde{I}(p+\ell+1)= I^\theta (\ell) \cap I^\theta(\ell+1)$ for $\ell=1, \ldots, n-1$.
\item $\tilde{I}(0) \cap \tilde{I}(j)= I^t \cup I^x(j)$ for $j=1, \ldots, p$.
\item $\tilde{I}(0) \cap \tilde{I}(p+\ell)= I^\theta(\ell)$ for $\ell=1, \ldots, n-1$.
\end{itemize}
Hence, it is clear that:
$ \tilde{I}(j+1) \cap \bigcup_{j'\le j}  \tilde{I}(j')\subseteq \tilde{ I}(0), \quad \forall j=1, \ldots, p+n-1.$
\end{enumerate}

Therefore, the result follows by combining  \cite[Theorem 3.2]{sparse} and  \cite[Theorem 9]{jibetean} .

\par \fin

\end{proof}

The above theorem allows us to approximate and solve the original problem $\mrf _{\lambda}^0$ by its relaxation  \ref{lower0} up to any degree of accuracy by solving block diagonal (sparse) SDP programs which are convex programs for each fixed relaxation order $r$ and that can be solved, up to any given accuracy, in polynomial time with available open source solvers as SeDuMi, SDPA, SDPT3, etc. Observe that the reduction in the number of variables of the semidefinite programs is remarkable. For instance, for the relaxation of order $r$  from $O(nv_1^{2r})$ to $O((p+n+1)\left(\frac{nv_1}{p+n+1}\right)^{2r})$.

\subsection{Symmetry}

In this section, we describe how to exploit the symmetry  of Problem \ref{pro:locgen2} to construct another much simpler SDP relaxation. This relaxation is based on the invariance of this problem under the action of the symmetric group applied to the indices of the facilities. The general framework and theoretical details about the reduction of dimensionality of polynomial optimization problems based on the invariance under general compact groups are given in \cite{lasserre-sym}.

 In this section we are interested in applying particular reductions to Problem \ref{pro:locgen2} based on the invariance of such a problem under the symmetric group. We denote by $\sym_p$ the symmetric group on $p$ variables that represent the $p$ facilities that want to be located. Note that the $j$ label established in our formulation for the facilities is arbitrary and that several optimal solutions appear when the facilities are re-labeled using a different number for them. Hence, if $x_1^*, \ldots, x_p^*$ is a set of optimal facilities for our problem, $x_{\sigma(1)}^*, \ldots, x_{\sigma(p)}^*$ is also a set of optimal facilities for any $\sigma \in \sym_p$. In what follows we want to use this symmetry of the decision variables to reduce the sizes of the matrices involved in the hierarchy of semidefinite programs that converge to the optimal solution when the moment approach is applied.

Recall that the complete set of variables of \ref{pro:locgen2} is $(x,z,u,v,\zeta, w,t, \theta)$. We also denote by $N =d+2n+2nd$ and $M=n(n+2)$. Note that the overall number of variables is $nv_1=Np+M$.

We will apply symmetry results when permuting the $j$-indices in the set of variables $\Upsilon=\{x, z, u, v, \zeta\}$ and keeping $w$, $t$ and $\theta$ invariant. Hence, we consider the following  action $\varphi$ over $\R^{p}$:
$$
\varphi: \sym_p \times \R^{p} \rightarrow \R^{p}
$$
defined as  $\varphi(\sigma, (y_1, \ldots, y_p)) = (y_{\sigma(1)}, \ldots, y_{\sigma(p)})$ for any $\sigma \in \sym_p$ and $y \in \R^p$.

With this action, we can define the following action over the overall set of variables of our problem:

$$\varphi_\Upsilon: \sym_p \times \R^{Np+M} \rightarrow \R^{Np+M}$$

\noindent defined such that $\varphi_\Upsilon$ maps $(\sigma, (x,z,u,v,\zeta,w,t,\theta))$ into $(\varphi(\sigma, x(I^x(1))), \ldots, \varphi(\sigma, v(I^v(i,k))), \varphi(\sigma, \zeta(I^\zeta(i,k))),$ \\
$ w(I^w), t(I^t),\theta(I^\theta))$,  i.e., permuting the indices associated with facilities in the decision variables (the $j$-index).

Hence, from the definition of $\varphi$, and denoting for each pair of actions $\varphi$ and $\psi$ of the symmetric group $\sym_p$ over $\R^p$, $\varphi \oplus \psi: \sym_p \times \R^{p+p} \rightarrow \R^{p+p}$ as $\varphi \oplus \psi (\sigma, (y, y')) = (\varphi(\sigma, y), \psi(\sigma, y'))$, we get the following result:

\begin{lemma}
\label{lem:dec0}
$\varphi_\Upsilon = \varphi \oplus \stackrel{N}{\cdots} \oplus \varphi \oplus 1_M$.
\end{lemma}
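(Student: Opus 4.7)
The plan is to unwind the definition of $\varphi_\Upsilon$ by partitioning the decision variables of \ref{pro:locgen2} into scalar blocks of length $p$, each indexed by the facility label $j\in\{1,\ldots,p\}$, together with a residual block that is untouched by the action. Since $\sym_{p}$ acts only on the $j$-index, every block must transform either by a copy of the defining action $\varphi$ on $\R^{p}$ or trivially, and the whole assertion reduces to counting blocks of each type.

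First I would reorganize the facility-dependent variables coordinate-wise. For each $k\in\{1,\ldots,d\}$ the tuple $(x_{1k},\ldots,x_{pk})\in\R^{p}$ is permuted by $\varphi(\sigma,\cdot)$, yielding $d$ copies of $\varphi$. For each $i\in\{1,\ldots,n\}$ the tuples $(z_{i1},\ldots,z_{ip})$ and $(u_{i1},\ldots,u_{ip})$ each contribute one copy of $\varphi$, giving $2n$ further copies. For each pair $(i,k)\in\{1,\ldots,n\}\times\{1,\ldots,d\}$ the tuples $(v_{i1k},\ldots,v_{ipk})$ and $(\zeta_{i1k},\ldots,\zeta_{ipk})$ each contribute another copy, giving $2nd$ further copies. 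Adding these contributions yields $d+2n+2nd=N$ copies of $\varphi$, accounting for exactly $Np$ scalar coordinates, in agreement with the count $pd+np+np+npd+npd=pN$ of variables in $(x,z,u,v,\zeta)$.

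Next, the remaining variables $w\in\R^{n^{2}}$, $t\in\R^{n}$ and $\theta\in\R^{n}$ are, by the very definition of $\varphi_\Upsilon$, left invariant. Their combined dimension is $n^{2}+2n=n(n+2)=M$, so the restriction of $\varphi_\Upsilon$ to this coordinate subspace is the trivial action $1_{M}$.

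Finally, I would observe that the reorganization above is nothing but a linear relabelling (a permutation of coordinates) of $\R^{Np+M}$, and therefore it intertwines $\varphi_\Upsilon$ with $\varphi\oplus\stackrel{N}{\cdots}\oplus\varphi\oplus 1_{M}$, which is the claimed identity. The only real obstacle is notational: making the matching between the tensor-like indexing $(i,j,k)$ used for the original variables and the flat coordinate-by-coordinate splitting of $\R^{Np+M}$ fully rigorous. Once this bookkeeping is made explicit, the lemma is immediate from applying $\varphi(\sigma,(y_{1},\ldots,y_{p}))=(y_{\sigma(1)},\ldots,y_{\sigma(p)})$ block by block.
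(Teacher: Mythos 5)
Your argument is correct and coincides with what the paper intends: the paper states this lemma without proof, as immediate from the definition of $\varphi_\Upsilon$, and your block-by-block bookkeeping ($d+2n+2nd=N$ facility-indexed blocks of length $p$ each carrying a copy of $\varphi$, plus the trivial action on the $n^{2}+2n=M$ coordinates of $(w,t,\theta)$) is exactly the counting that justifies the stated decomposition. Nothing further is needed.
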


 We say that a polynomial $p(x)$ is invariant under an action $\varphi: \sym_p \times \R^n \rightarrow \R^n$ if $p(\varphi(\sigma,x))=p(x)$, for all $\sigma \in G$. A polynomial optimization problem is said invariant under $\varphi$ if both the objective function and the constraints are invariant under the action. Hence, it is straightforward to check that the objective function and constraints that define Problem \ref{pro:locgen2} are invariant under the  action of $\varphi_\Upsilon$ over the set of variables.

\begin{lemma}
Let $\sym_p$ be the symmetric group on the set $\{1, \ldots, p\}$. Then, \eqref{pro:locgen2}  is invariant under $\varphi_\Upsilon$.
\end{lemma}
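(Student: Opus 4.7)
The plan is to verify invariance constraint-by-constraint, exploiting the fact that $\varphi_\Upsilon$ only permutes the facility index $j$ in the blocks $x,z,u,v,\zeta$ and leaves $w,t,\theta$ untouched. Invariance of the objective function is immediate: $\L(\theta):=\sum_{\ell=1}^n \lambda_\ell \theta_\ell$ depends only on $\theta$, which is fixed by $\varphi_\Upsilon$. Hence the entire burden is to show that the feasible set $\mathbf{\hat K}$ is mapped to itself by every $\sigma\in\sym_p$ acting through $\varphi_\Upsilon$.

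The key observation is that each constraint block in \ref{pro:locgen2} is a \emph{family} indexed by (subsets of) the indices $(i,j,\ell,k)$, and I would partition these families according to whether they involve the permuted index $j$ or not. Constraints \eqref{c:ti2}, \eqref{c:theta1}, \eqref{c:n1-6}, \eqref{c:n1-7} and the binary constraints on $w$ involve only variables fixed by $\varphi_\Upsilon$, so they are trivially preserved. Constraints \eqref{c:ti1}, \eqref{c:n1-1}--\eqref{c:n1-4} and the binary constraints on $z$ form families indexed by $j$; applying $\varphi_\Upsilon(\sigma,\cdot)$ to a feasible point simply relabels the $j$-th constraint of such a family as the $\sigma^{-1}(j)$-th constraint. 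Explicitly, if $(x,z,u,v,\zeta)$ satisfies e.g. $v_{ijk}^r\le \zeta_{ijk}^s u_{ij}^{r-s}$ for all $(i,j,k)$, then after relabeling we have $v_{i\sigma(j)k}^r\le \zeta_{i\sigma(j)k}^s u_{i\sigma(j)}^{r-s}$ for all $(i,j,k)$, which is the same set of inequalities. The only constraint requiring a further remark is \eqref{c:n1-5}, $\sum_{j=1}^p z_{ij}=1$, whose invariance follows from the obvious identity $\sum_{j=1}^p z_{i\sigma(j)}=\sum_{j=1}^p z_{ij}$.

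Finally, constraint \eqref{c:domain-x} requires $x_j\in\mathbf{K}$ for all $j$, which is plainly preserved by any permutation of the $j$ labels, and the nonnegativity and domain constraints \eqref{c:domain-tvud} are preserved componentwise. Combining the above observations, both the objective function and the full set of constraints are invariant under $\varphi_\Upsilon$, which gives the claim. I do not anticipate a substantive obstacle: the proof is essentially a bookkeeping exercise made transparent by the block decomposition given in Lemma~\ref{lem:dec0}, which guarantees that $\varphi_\Upsilon$ acts as a direct sum of identical permutation actions on the $N$ facility-indexed blocks and trivially on the remaining $M$ coordinates.
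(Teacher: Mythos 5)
Your proof is correct: the paper states this lemma without a formal proof, merely asserting that the invariance is ``straightforward to check,'' and your constraint-by-constraint verification (objective depends only on the fixed variables $\theta$; constraint families indexed by $j$ are merely relabeled because the data $a_{ik}$, $UB_i$ do not depend on $j$; the remaining constraints involve only $w,t,\theta$ or are symmetric sums over $j$) is exactly that intended routine check. Nothing is missing.
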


Furthermore, in practice we are interested in applying not only the symmetry results but also the sparsity results given in Section \ref{sec:sparse}. In this point, note that the sparsity results are not affected by the symmetry reduction presented in this section since the  action $\varphi_\Upsilon$ maps the blocks  $F(0)$, $F(p+\ell)$ for all $\ell=1,\ldots,n$  into themselves and the block ${\rm F}(j)$  for $j=1,\ldots,p$ is permuted with ${\rm F}(\sigma(j), l)$ when a fixed permutation $\sigma$ is given. Hence, both reductions can be applied to our problem.

The results on the reduction of dimensionality by symmetry are based on the Theory of Representations, in particular on the decompositions of the symmetric group $\sym_p$ into irreducible $\sym_p$-modules (the interested reader is refereed to \cite{steimberg} fur further details about the general theory for finite groups). Note that our action is somehow special because of its construction as a direct sum of actions. We show in the following result that once a decomposition into irreducible representations is computed, we can easily compute the decomposition of convenient modifications of the original representation.
\begin{lemma}
\label{lemma:dec}
 Let $p, K,N \in \N$, and $\rho: \sym_p \rightarrow GL(V)$ a representation of $\sym_p$. Consider $\rho = \rho_1 \oplus \cdots \oplus \rho_k$ a decomposition into irreducible representations of $\sym_p$. Then, the representation $\tilde{\rho}=1_K \oplus N \rho: G \rightarrow GL(V^{K+Np})$, $\tilde{\rho}(\sigma) = {\rm Diag}(I_K, \rho(g), \stackrel{N}{\cdots}, \rho(g))$ can be decomposed into irreducible representations as:
$$
\tilde{\rho} = 1_K \oplus N \rho_1 \oplus \cdots \oplus N \rho_k
$$
where $N\rho_i = \rho_i \oplus \stackrel{N}{\cdots} \oplus \rho_i$, for all $i$.
\end{lemma}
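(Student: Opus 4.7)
The plan is to argue that the claim follows from the elementary algebra of direct sums of representations, together with the fact that the trivial representation $1$ of $\sym_p$ is itself irreducible.

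First I would unpack the definition of $\tilde\rho$. By construction $\tilde\rho$ acts on the direct sum $V^{K+Np} = \R^K \oplus V^{Np}$ where the first block carries the trivial action of $\sym_p$ (since $\tilde\rho(\sigma)$ restricts to $I_K$ on it) and each of the $N$ copies of $V$ carries the representation $\rho$. Thus, as representations of $\sym_p$, we have the identification
\begin{equation*}
\tilde\rho \;\cong\; \underbrace{1 \oplus \cdots \oplus 1}_{K}\; \oplus\; \underbrace{\rho \oplus \cdots \oplus \rho}_{N} \;=\; 1_K \oplus N\rho.
\end{equation*}
This is just the abstract restatement of the block-diagonal form $\mathrm{Diag}(I_K, \rho(\sigma), \ldots, \rho(\sigma))$.

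Next I would use distributivity of the direct sum over the given irreducible decomposition $\rho = \rho_1 \oplus \cdots \oplus \rho_k$. Since for any representations $\alpha$ and $\beta$ one has $\alpha \oplus \beta \cong \beta \oplus \alpha$ and the operation is associative, reordering the $N \cdot k$ summands in $N\rho$ yields
\begin{equation*}
N\rho \;=\; \bigoplus_{i=1}^{N} \Bigl(\bigoplus_{j=1}^{k} \rho_j\Bigr) \;\cong\; \bigoplus_{j=1}^{k}\Bigl(\bigoplus_{i=1}^{N} \rho_j\Bigr) \;=\; N\rho_1 \oplus \cdots \oplus N\rho_k.
\end{equation*}
Combining the two displays gives $\tilde\rho \cong 1_K \oplus N\rho_1 \oplus \cdots \oplus N\rho_k$.

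Finally I would note that this is indeed a decomposition into irreducible $\sym_p$-modules: each $\rho_j$ is irreducible by hypothesis, and the trivial representation $1$ is a one-dimensional irreducible representation of any group, so the $K$ copies making up $1_K$ are irreducible summands as well. The main (very mild) subtlety is merely the cosmetic one of permuting summands in a direct sum; no character computation or deeper representation-theoretic argument is needed because the original decomposition of $\rho$ is supplied as data.
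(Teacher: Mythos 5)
Your proof is correct and follows essentially the same route as the paper's: identify $\tilde\rho$ with $1_K \oplus N\rho$, regroup the summands using commutativity/associativity of the direct sum, and observe that every summand is irreducible. If anything, you are slightly more careful than the paper in pointing out that the $K$ trivial summands are irreducible because the trivial representation is one-dimensional (the paper attributes all irreducibility to the hypothesis), but the argument is the same.
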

\begin{proof}
Clearly, each representation in the decomposition of $\tilde{\rho}$ is irreducible by hypothesis. Furthermore, since $\rho = \rho_1 \oplus \cdots \oplus \rho_k$, $1_K \oplus N \rho_1 \oplus \cdots \oplus N \rho_k = 1_K \oplus N(\rho_1 \oplus \cdots \oplus \rho_k) = 1_K \oplus N\rho = \tilde{\rho}$.
\qed
\end{proof}

By Maschke's Theorem (see \cite[Thm 1.5.3]{sagan-2001}) applied to the symmetric group, every $\sym_p$-module $V$ is a direct sum of irreducible $G$-submodules of $V$, i.e.,
\begin{equation}\label{eq:maschke} V \ \cong \ \bigoplus_{i=1}^{s}V_{i}
\text{\, with irreducible $\sym_p$-submodules } V_{i} \, .
\end{equation}
Each irreducible $\sym_p$-submodule might occur several times in the direct
sum.

Recall that a scalar product $\langle\cdot,\cdot\rangle$ on $\R[X]$ is $\sym_p$-invariant  if
$\langle g,f\rangle =\langle g_\sigma,f_\sigma\rangle $ for every $f,g\in\R[X]$ and every $\sigma\in \sym_p$.

Assume a decomposition of $\R[X]$ like in \eqref{eq:maschke}, consider $V_i = W_{i1} \oplus \cdots \oplus W_{i\eta_i}$ and  pick any $b_{i,1,1}\in W_{i1}$. Then using the fact that the $W_{ij}$ are isomorphic we can find a vector $b_{i,j,1}\in W_{ij}$ such that $\phi_{i,j}(b_{i,j,1})=b_{i,j+1,1}$, where $\phi_{i,j}$ is a $\sym_p$-isomorphism that maps $W_{i,j}$ to $W_{i,j+1}$. Now using for example Gram-Schmidt every $b_{i,j1}$ can be extended to an orthonormal basis of $W_{ij}$. We will call such a  resulting basis $\mathcal{B}=\{b_{1,1,1},\ldots,b_{h,\eta_h,\nu_h}\}$ a \emph{symmetry-adapted basis} of $\R[X]$.

We detail now how to compute the decompositions of representations of the symmetric group, the symmetry-adapted bases and how to apply those decompositions to solve Problem \ref{pro:locgen2}. First of all, we concentrate on the problem of determining irreducible  representations and symmetry adapted bases of $\R[Y]=\R[Y_1, \ldots, Y_p]$ for the symmetric group $\sym_p$. The irreducible representations of $\sym_p$ are in bijection with the partitions of $p$. Thus, in order to construct the suitable moment matrix that split this structure, we need a decomposition of the ring of polynomials into $\sym_p$-irreducible components.  The classical construction of Specht based on the theory of tableaux allows this representation \cite{spetch}.

Let $\lambda \vdash p$ be a partition of $p$, i.e., a sequence of non increasing positive integers $\lambda_1, \ldots, \lambda_k$ with $\sum_{l=1}^k \lambda_l = p$. Given two partitions $\lambda, \mu \vdash p$, we say that $\lambda$ dominates $\mu$, and we write $\lambda \trianglerighteq \mu$, if $\sum_{l=1}^i \lambda_i \geq  \sum_{l=1}^i \mu_i$ for all $i$. A Young tableaux for a partition $\lambda \vdash p$ consists of as many rows as components in $\lambda$, each of them with as many columns as the number in the corresponding component in $\lambda$.  The entries are the elements in $\{1, \ldots, n\}$, and each of these numbers appear exactly once.

A generalized Young tableau of shape $\lambda \vdash p$ and content $\mu \vdash n$ is a Young tableau for $\lambda$ whose entries are $\{1, \stackrel{\mu_1}{\ldots}, 1\}, \ldots, \{\ell, \stackrel{\mu_\ell}{\ldots}, \ell\}$. A generalized Young tableau is called semistandard if its rows are non increasing and its columns are strictly increasing.

Given a partition $\lambda\vdash p$, and a $\lambda$-Young tableau $t$, the class of equivalence of $t$ (by rows) is denoted by $\{t\}$ and is called a tabloid. The permutation module $M^\lambda$ is then the $\sym_p$-module defined by $\C\{\{t_1\}, \ldots, \{t_\ell\}\}$, where $\{\{t_i\}_{i=1}^\ell\}$ is the complete list of $\lambda$-tabloids.

Two tableaux are said \textit{equivalent} if the corresponding rows and columns of the two tableaux contain the same numbers. The equivalence class of a tableau $T$ is denoted by $[T]$.

The column stabilizer of a $\lambda$-tableau $t$ is ${\rm Cstab}_t = \sym_{C_1} \times \cdots \times \sym_{C_\nu}$ where $C_1, \ldots, C_\nu$ are the columns of the tableau and $\sym_{C_i}$ is the symmetric group on $C_i$. The polytabloid associated to a tabloid $T$ is defined by:
$$
e_t = \sum_{\sigma \in {\rm CStab}_T} sgn(\sigma) \sigma(\{T\}).
$$

The irreducible representations of the symmetric group $\sym_p$ are in one-to-one correspondence with the partitions of $p$, and they are given by the Specht modules.
The Specht module of $\lambda \vdash p$, $S^\lambda$ is the submodule of the permutation module $M^\lambda$ spanned by the polytabloids $\e_T$. The dimension of $S^\lambda$, $dim(S^\lambda)$, is the number of standard Young tableaux for $\lambda$.

On the other hand, for $\beta \in \N^p$, let us denote $\R\{Y^\beta\} = \{Y^{\sigma(\beta)}: \sigma \in \sym_p\}$. Then, it is clear that
$$
\R[Y]_{\leq r} = \bigoplus_{s=0}^r \; \bigoplus_{\beta \in \N^{p}_s} \R\{Y^\beta\},
$$
where $\N^{p}_s = \{\beta \in \N^p:  |\beta|=s\}$.

With the above decomposition, it would be enough then to construct a decomposition of $\R\{Y^\beta\}$ into $\sym_p$-irreducible components,

Let $\xi \in \N^p$. We denote by $b^\xi \in \N^q$ the vector of distinct components of $\xi$ ordered (non increasingly) according to the multiplicity of the occurrence in $\xi$. Then, we construct $\mu\in \N^q$ as $\mu_j = \# \{i \in \{1, \ldots, p\}: \xi_i = b^{\xi}_j\}$. For example, for $\xi= (4,2,2, 4, 7, 4) \in \N^6$, we get that $b^\xi=(4,2,7)$ and $\mu=(3,2,1)$ since $4$ appears $3$ times in $\xi$, $2$ appears twice and $7$ once.

We analyze now how to express $\R\{Y\}$ in terms of irreducible $\sym_p$-invariant modules.  Let $b^\beta$ as defined above.

For a partition $\lambda \vdash p$, a semi-standard $\lambda$-tableau $t_\lambda$ and $T$ a  generalized Young tableau with shape $\lambda$ and content $\mu^\beta$ we define:
$$
\mathbf{Y}^{(t_\lambda, T)} =\prod_{k} \prod_{(r,c)} Y_{ t_\lambda(r,c),k}^{b^\beta_{T(r,c)}}
$$
where, $T(r,c)$ is the element in its $c$-th column and $r$-th row.

Next, for each column $c$ of the $\lambda$-tableaux, we construct the Vandermonde determinant as:
$$
Van_c^{T} = \det \left(\begin{matrix}  Y_{ t_\lambda(1,c)}^{b^\beta_{T(1,c)}} & \cdots & Y_{ t_\lambda(q,c)}^{b^\beta_{T(1,c)}}  \\
\vdots & \ddots & \vdots\\
 Y_{ t_\lambda(1,c)}^{b^\beta_{T(q,c)}}  & \cdots & Y_{ t_\lambda(q,c)}^{b^\beta_{T(q,c)}}
 \end{matrix}\right)
 $$

With this determinant, we construct the polynomial:
$$
s_{t_{\lambda}, T} = \prod_c Van_c^{T},
$$
and finally the Specht polynomial:
$$
S_{t_{\lambda},T} = \dsum_{S \in [T]} s_{t_{\lambda}, S}.
$$

For a partition $\mu \vdash p$, $M^\mu$ is the $\sym_p$-module generated by the complete list of $\mu$-tabloids. $S^\mu$ is the submodule of $M^\mu$ generated by the polytabloids $e_T = \sum_{\sigma \in CStab_T} sgn(\sigma) \sigma\{T\}$ for each Young tableau for $\mu$. The following result states that the Specht polynomials generate a submodule of $\R\{Y^\beta\}$ equivalent to the Specht module.

\begin{lemma}
\label{lemma:specht}
 Let $T$ be a generalized Young tableau with shape $\lambda\vdash p$ and content $\mu^\beta$. The generalized Specht polynomials $S_{(t_\lambda, T)}$ generate an $\sym_p$-submodule of $\R\{Y^\beta\}$ which is isomorphic to the Specht module $S^\lambda$.
\end{lemma}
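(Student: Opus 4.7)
The plan is to exhibit an explicit $\sym_p$-equivariant linear map from the classical Specht module $S^\lambda$ into $\R\{Y^\beta\}$ whose image is the $\R$-span of the $\sym_p$-orbit of $S_{(t_\lambda,T)}$, and then to deduce that the map is an isomorphism by appealing to the irreducibility of $S^\lambda$. The underlying reason this should work is that the Vandermonde factors $\mathrm{Van}_c^T$ transform by the sign character under the column stabilizer (antisymmetry of the determinant), while the outer sum over $[T]$ symmetrizes over row-equivalent tableaux; these are precisely the transformation rules defining the polytabloid $e_{t_\lambda}$.

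Concretely, I would define a map $\Phi\colon M^\lambda\to\R\{Y^\beta\}$ by sending a $\lambda$-tabloid $\{u\}$ (with $u$ a $\lambda$-tableau on $\{1,\dots,p\}$) to a row-symmetrized combination of monomials of the form $\prod_{k}\prod_{(r,c)} Y_{u(r,c),k}^{b^\beta_{T(r,c)}}$, chosen so as to be independent of the row-ordering within $u$. Equivariance is immediate: $\sigma\in\sym_p$ acts on tabloids by relabeling the entries of $u$ and on polynomials by permuting indices of the $Y_i$'s, and both operations correspond through $\Phi$. Restricting $\Phi$ to the Specht submodule $S^\lambda\subset M^\lambda$, which is spanned by the polytabloids $e_u=\sum_{\tau\in\mathrm{CStab}_u}\mathrm{sgn}(\tau)\{\tau u\}$, the alternating sum over column stabilizers collapses, in each column $c$, into the Vandermonde determinant $\mathrm{Van}_c^T$. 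Consequently $\Phi(e_{t_\lambda})$ equals (up to a nonzero scalar coming from multiplicities) $S_{(t_\lambda,T)}$, and the image of $\Phi|_{S^\lambda}$ is exactly the $\sym_p$-submodule generated by $S_{(t_\lambda,T)}$.

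Finally, one must show that $\Phi|_{S^\lambda}$ is nonzero, which reduces to proving $S_{(t_\lambda,T)}\not\equiv 0$. I would do this by isolating a leading monomial under the lexicographic order on $\R[Y]$: the term obtained by taking the identity permutation inside each Vandermonde factor and the canonical representative of $[T]$ has a unique multidegree pattern that cannot cancel against any other contribution in the alternating product, so it survives with coefficient $\pm 1$. Since $S^\lambda$ is irreducible over $\R$ (a classical consequence of Maschke's theorem and the theory of Young symmetrizers), Schur's lemma forces the nonzero equivariant map $\Phi|_{S^\lambda}$ to be injective, hence an isomorphism onto its image.

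The hardest step will be the careful bookkeeping that $\Phi$ is well-defined on tabloids and that $\Phi(e_{t_\lambda})$ really does coincide with $S_{(t_\lambda,T)}$ rather than with a closely related polynomial. Both verifications rely on matching the combinatorial equivalence $[T]$ used in the Specht polynomial (identifying tableaux with the same rows \emph{and} columns) with the row-equivalence underlying the permutation module $M^\lambda$, so that the Vandermonde antisymmetry and the sum over $[T]$ align exactly with the sign-twisted column sum and the tabloid identification in the polytabloid construction.
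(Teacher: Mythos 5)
The paper itself does not prove this lemma: it is stated as a known fact from Specht's classical construction (\cite{spetch}), with the surrounding decomposition results quoted from \cite{lasserre-sym}, so there is no in-paper argument to compare against. Your outline is the standard proof of exactly this fact and is sound: identify $\R\{Y^\beta\}$ with the permutation module $M^{\mu^\beta}$, build a $\sym_p$-equivariant map $\Phi$ out of $M^\lambda$ by assigning to a tabloid the row-symmetrized monomial pattern dictated by $T$, check that the column-alternating sum defining the polytabloid turns into the Vandermonde factors so that $\Phi(e_{t_\lambda})$ is a nonzero multiple of $S_{(t_\lambda,T)}$, and then use irreducibility of $S^\lambda$ (the kernel of a nonzero equivariant map on an irreducible module is trivial) plus cyclicity of $S^\lambda$ to conclude the image is the submodule generated by $S_{(t_\lambda,T)}$ and is isomorphic to $S^\lambda$. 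Two points you flag but should actually carry out, since they are where the content lies: (i) well-definedness on tabloids forces you to sum over the row-rearrangements of $T$ (this is the classical homomorphism $\theta_T\colon M^\lambda\to M^{\mu^\beta}$ of James/Sagan), and the equivalence class $[T]$ used in $S_{(t_\lambda,T)}$ must be matched against that row-sum carefully; (ii) the nonvanishing step genuinely requires $T$ to be semistandard (or at least column-injective): if a column of $T$ repeats an entry the Vandermonde factor vanishes identically and the generated module is zero, so the lemma as literally stated needs that hypothesis, which is implicit in the paper since only tableaux in $\mathcal{T}_{\lambda,\mu}$ are used afterwards. With those two verifications written out, your argument is complete and is the same route the cited literature takes.
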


With the above results, we get the following result which is proven in \cite{lasserre-sym}.

\begin{theorem}
Let $\beta \in \N^p_0$ with $\dsum_{i=1}^p \beta_i = r$ and shape $\mu^\beta$. Then:
$$
\R\{Y^\beta\}  = \bigoplus_{\lambda\unrhd\mu^\beta} \bigoplus_{T \in \mathcal{T}_{\lambda,\mu}} \R\{S_{t_\lambda,T}\}
$$
where $t_\lambda$ denotes the unique $\lambda$-tableau with increasing rows and columns and $\mathcal{T}_{\lambda, \mu}$ the set of  semistandard generalized Young  tableaux of shape $\lambda$ and content $\mu$.
\end{theorem}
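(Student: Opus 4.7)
The plan is to reduce the statement to a classical fact from the representation theory of the symmetric group, namely Young's rule, and then use Lemma \ref{lemma:specht} to realize each copy of a Specht module inside $\R\{Y^\beta\}$ concretely via Specht polynomials.

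First I would set up a canonical $\sym_p$-equivariant isomorphism between $\R\{Y^\beta\}$ and the permutation module $M^{\mu^\beta}$. By definition, $\R\{Y^\beta\}$ has basis $\{Y^{\sigma(\beta)} : \sigma \in \sym_p\}$, and because $\mu^\beta$ records the multiplicities of the distinct entries of $\beta$, the stabilizer of $Y^\beta$ under the $\sym_p$-action is precisely a Young subgroup $\sym_{\mu^\beta} = \sym_{\mu^\beta_1} \times \cdots \times \sym_{\mu^\beta_q}$. Identifying each monomial $Y^{\sigma(\beta)}$ with the $\mu^\beta$-tabloid whose $i$-th row is $\{j : \sigma(\beta)_j = b^\beta_i\}$ produces an $\sym_p$-module isomorphism $\R\{Y^\beta\} \cong M^{\mu^\beta}$.

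Second, I would invoke Young's rule (see e.g. \cite{sagan-2001}): for any partition $\mu \vdash p$, $M^\mu \cong \bigoplus_{\lambda \unrhd \mu} K_{\lambda,\mu}\, S^\lambda$, where the multiplicity $K_{\lambda,\mu}$ is the Kostka number, equal to the number of semistandard generalized Young tableaux of shape $\lambda$ and content $\mu$, i.e.\ $K_{\lambda,\mu^\beta} = |\mathcal{T}_{\lambda,\mu^\beta}|$. This already gives the abstract right-hand side of the claimed decomposition, modulo identification. Lemma \ref{lemma:specht} then exhibits, for each $T \in \mathcal{T}_{\lambda,\mu^\beta}$, an explicit $\sym_p$-submodule $\R\{S_{t_\lambda,T}\}$ of $\R\{Y^\beta\}$ isomorphic to $S^\lambda$, where $t_\lambda$ is the chosen reference $\lambda$-tableau.

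The main obstacle is to verify that the sum over all $(\lambda, T)$ is actually direct, i.e.\ that these $K_{\lambda,\mu^\beta}$ candidate copies of each $S^\lambda$ together span a space of the right total dimension without collapse. The dimensions already match: $\dim \R\{Y^\beta\} = p!/\mu^\beta! = \sum_{\lambda \unrhd \mu^\beta} K_{\lambda,\mu^\beta} \dim S^\lambda$ by Young's rule, while $\sum_{\lambda, T} \dim \R\{S_{t_\lambda,T}\} = \sum_\lambda |\mathcal{T}_{\lambda,\mu^\beta}| \dim S^\lambda$ coincides with the same quantity. So it suffices to show that the generalized Specht polynomials $\{S_{t_\lambda, T}\}$ indexed by pairs $(\lambda, T)$ with $\lambda \unrhd \mu^\beta$ and $T$ semistandard of shape $\lambda$ and content $\mu^\beta$ are linearly independent in $\R\{Y^\beta\}$. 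This is the analogue, in the polynomial realization, of the classical linear independence of standard polytabloids in $M^\mu$; it follows by translating the argument via the isomorphism of the first step, since each $s_{t_\lambda,T}$ is (up to sign) the image of the polytabloid $e_{(t_\lambda,T)}$ under the identification $\R\{Y^\beta\} \cong M^{\mu^\beta}$, and the standard polytabloids for all $\lambda \unrhd \mu$ form a basis of $M^\mu$. Combining the dimension count and the linear independence yields the required direct sum decomposition.
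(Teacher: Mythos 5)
A preliminary remark: the paper does not prove this theorem at all --- it is stated with the phrase ``which is proven in \cite{lasserre-sym}'' --- so your attempt can only be measured against the classical route taken in that reference. Your first two steps are exactly that route and are fine: the stabilizer of $Y^\beta$ under the permutation action is the Young subgroup attached to $\mu^\beta$, so $\R\{Y^\beta\}\cong M^{\mu^\beta}$ as $\sym_p$-modules; Young's rule gives $M^{\mu^\beta}\cong\bigoplus_{\lambda\unrhd\mu^\beta}K_{\lambda,\mu^\beta}\,S^\lambda$ with the Kostka number $K_{\lambda,\mu^\beta}=|\mathcal{T}_{\lambda,\mu^\beta}|$; and Lemma \ref{lemma:specht} supplies, for each $T$, a concrete submodule $\R\{S_{t_\lambda,T}\}\cong S^\lambda$ of $\R\{Y^\beta\}$.

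The gap is in the directness step. You reduce the problem to showing that the generators $S_{t_\lambda,T}$ are linearly independent, but that reduction is not valid: linear independence of cyclic generators does not imply that the submodules they generate sum directly. Inside a single isotypic component, linearly independent vectors can generate the same copy of $S^\lambda$, and even pairwise distinct copies with independent generators need not sum directly (in $S\oplus S$, the three copies $\{(s,0)\}$, $\{(0,s)\}$, $\{(s,s)\}$ have linearly independent generators and pairwise trivial intersections, yet their sum has dimension $2\dim S$, not $3\dim S$). The classical fact you invoke to justify the independence is also mis-stated: standard polytabloids of shape $\lambda$ are elements of $M^\lambda$, not of $M^{\mu^\beta}$, so they cannot ``form a basis of $M^\mu$''. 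What is actually needed is: (i) submodules attached to different shapes $\lambda\neq\lambda'$ lie in different isotypic components and hence meet trivially; and (ii) for fixed $\lambda$, the submodules $\R\{S_{t_\lambda,T}\}$, $T\in\mathcal{T}_{\lambda,\mu^\beta}$, correspond to linearly independent elements of $\mathrm{Hom}_{\sym_p}(S^\lambda,M^{\mu^\beta})$ --- this is the semistandard-homomorphism basis theorem (see \cite{sagan-2001}), which is the real content behind the multiplicity $K_{\lambda,\mu^\beta}$; since the $\lambda$-isotypic component is canonically $\mathrm{Hom}_{\sym_p}(S^\lambda,M^{\mu^\beta})\otimes S^\lambda$, independence of the homomorphisms gives directness of the sum over $T$. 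Equivalently, you could prove linear independence of the full family of translated Specht polynomials indexed by pairs (standard $\lambda$-tableau, semistandard $T$), which has cardinality $\dim M^{\mu^\beta}$, rather than only of the $\sum_\lambda K_{\lambda,\mu^\beta}$ generators. With (i)--(ii) in place, your dimension count does close the argument.
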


Once the general theory for decomposing the standard symmetric group is detailed, our action $\varphi$ needs to be treated slightly different since our set of variables has $Np+M$ elements and the symmetric group has $p$. To construct the basis of invariant polynomials  (with degree at most the desired relaxation order) in the complete set of variables of \ref{pro:locgen2}, once we know a basis of invariant polynomials in $p$ variables under $\sym_p$, we reproduce such a basis on each of the set of variables where the action is $\varphi_{(i,j, \ell)}$ is non trivial and completed it with the standard monomial basis where the action is defined as the identity.

\begin{lemma}
\label{lem:bases}
Let $\mathcal{B}_k(Y)$ be a symmetry-adapted basis of $\R[Y_1, \ldots, Y_p]$ of degree at most $k$ and $\mathcal{B}^{st}(X)$ the standard monomial basis of $\R[w(I^w), t(I^t)]$ with degree at most $k$. Then, the elements of a symmetry-adapted basis of $\R[x,z,u,v,\zeta,w,t,\theta]$ are of the form:

$$
b = b^{x_1} \cdots b^{v_{n,d}} \cdot b'
$$
where $b^{x_k} \in \mathcal{B}_k (x(I^x(k)))$, $b^{z_i} \in \mathcal{B}_i (z(I^z(i)))$,  $b^{u_i} \in \mathcal{B}_k (u(I^u(i)))$, $b^{v_{i,k}} \in \mathcal{B}_k (v(I^v(i,k)))$, for $i=1, \ldots, n$, $k=1, \ldots, d$, and $b' \in \mathcal{B}^{st}(X)$ and such that  $deg(b) \leq k$.
\end{lemma}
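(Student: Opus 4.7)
The plan is to reduce the lemma to Lemmas \ref{lem:dec0} and \ref{lemma:dec} by exploiting the multiplicative structure of the polynomial ring. First, I would note that under $\varphi_\Upsilon$ the indeterminates split into (i) $N = d + 2n + 2nd$ disjoint groups of $p$ variables each, where the $p$ variables of each group are permuted by $\sym_p$ via $\varphi$ (one group per coordinate of $x$, one per pair $(i,k)$ for $v$ and for $\zeta$, one per $i$ for $z$ and for $u$), and (ii) a set of $M = n(n+2)$ variables $(w, t, \theta)$ on which $\sym_p$ acts trivially. Correspondingly, the polynomial ring factors as
\[
\R[x,z,u,v,\zeta,w,t,\theta] \;\cong\; \R[x(I^x(1))] \otimes \cdots \otimes \R[\zeta(I^\zeta(n,d))] \otimes \R[w(I^w), t(I^t), \theta(I^\theta)],
\]
with $N$ tensor factors isomorphic to $\R[Y_1,\ldots,Y_p]$ and one trivial factor.

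Next, I would apply Lemma \ref{lem:dec0}, which says $\varphi_\Upsilon = \varphi \oplus \stackrel{N}{\cdots} \oplus \varphi \oplus 1_M$, to deduce that the action on $\R[x,z,u,v,\zeta,w,t,\theta]_{\le k}$ is the corresponding $N$-fold direct sum of the action induced by $\varphi$ on $\R[Y_1,\ldots,Y_p]_{\le k}$, tensored with the trivial action on the standard monomial basis of $\R[w(I^w), t(I^t),\theta(I^\theta)]_{\le k}$. By Lemma \ref{lemma:dec}, since a decomposition into irreducible $\sym_p$-modules of $\R[Y_1,\ldots,Y_p]_{\le k}$ is already encoded in the symmetry-adapted basis $\mathcal{B}_k$, the direct sum decomposition of the whole ring is obtained by placing such a basis on each of the $N$ tensor factors independently, and the standard basis on the trivial factor.

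To conclude, I would verify that the claimed product basis is indeed a basis of $\R[x,z,u,v,\zeta,w,t,\theta]_{\le k}$: multiplicativity of the tensor decomposition guarantees that products $b = b^{x_1}\cdots b^{\zeta_{n,d}} \cdot b'$ with factors in the prescribed bases span the ring, and linear independence follows from linear independence of the factor bases together with disjointness of the underlying sets of indeterminates. The degree constraint $\deg(b)\le k$ is imposed externally as a truncation of the direct-sum decomposition, compatible with the symmetry since each $\varphi$-factor commutes with the grading on its own variable group.

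The main technical point (rather than a true obstacle) will be carefully bookkeeping the indices of the $N$ variable groups and confirming that the resulting product basis is symmetry-adapted in the sense required: each product lies in a single isotypic component because $\varphi_\Upsilon$ acts as a direct sum (not a tensor product) on the $N$ factor rings, so irreducibility of the individual basis elements under the factor-wise actions is sufficient and no genuinely new irreducible decomposition of tensor products is needed.
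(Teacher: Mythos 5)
Your proposal takes essentially the same route as the paper's proof: factor $\R[x,z,u,v,\zeta,w,t,\theta]$ as an iterated extension (equivalently a tensor product) of the $N$ permuted variable blocks and the invariant $(w,t,\theta)$ block, place the blockwise symmetry-adapted bases on the former and the standard monomial basis on the latter, and take degree-truncated products. One caveat: your closing claim that each product automatically lies in a single isotypic component because $\varphi_\Upsilon$ is a direct sum is stronger than the argument warrants (a product of elements from distinct blocks transforms as a tensor product of irreducible $\sym_p$-modules, which in general decomposes further), but the paper's own proof is no more precise on this point, so your argument matches it in both structure and level of rigor.
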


\begin{proof}
The result follows from the equivalence between $R[(x,z,u,v,\zeta,w,t,\theta)]$ and the construction of such a ring by the extension $
\R[x(I^x(1))]\cdots [\zeta(I^\zeta(n,d))][w(I^w),t(I^t),\theta(I^\theta)]$, where for a given multivariate polynomial ring $K[Y]$, and a set of variables $Y'$, the polynomial ring $K[Y][Y']$ is defined as the set of polynomials in the variables $Y'$ with coefficients in $K[Y]$.  Hence a basis for $R[\Upsilon_{(0,0)}]$ can be constructed by iteratively multiplying the element in bases for the polynomial rings $R[x(I^x(1))], \ldots, \R[\zeta(I^\zeta(n,d))]$ and $\R[w(I^w), t(I^t), \theta(I^\theta)]$.

For instance, let $f  \in \R[(x,z,u,v,\zeta,w,t,\theta)]$. Then, by dividing $f$ by the standard monomial basis with term ordering $\succ$ such that $x_1 \succ \cdots \succ x_d \succ \cdots \succ \zeta_{nd} \succ w \succ t \succ \theta$, it can be written as
$$
f = q_1^{x_1} \cdots q_1^{x_d} q'_1 +  q_2^{x_2} \cdots q_2^{x_d} q'_2 + \cdots + q_d^{x_d}  q'_d  + q'_{0}
$$
where $q_j^{x_k} \in \R[x(I^x(k))]$, for $k=1, \ldots, d$, and $q_i' \in \R[w(I^w), t(I^t),\theta(I^\theta)]$, for $j=0, \ldots, d$. Now applying each of the bases to each polynomial, we get the desired expression.

In the polynomial ring $\R[w(I^w), t(I^t),\theta(I^\theta)]$ no symmetry is applied, so the standard basis is used. For the first $d$ rings, the symmetry adapted basis is used.
\qed
\end{proof}

Once the symmetry-adapted bases of our problem are computed, the symmetry-adapted SDP-relaxation must be stated. Note that, we have a symmetry-adapted basis for each shape $\lambda \vdash n$. Each of those bases will give as a block in the overall moment matrix. In order to do  that, we consider a $\sym_p$-linear map, i.e., a linear map $L^{\rm sym}: \R[\Upsilon] \rightarrow \R$ such that $L^{\rm sym}(f(X)) = L^{\rm sym} (f(\varphi(\sigma,X)))$ for all  $\sigma \in \sym_p$.  We also define the following bilinear form, for any $g \in R[\Upsilon]$:
$$
\mathcal{L}^{\rm sym}_{g}: \R[X] \times \R[X] \longrightarrow \R
$$
$$
(p,q) \mapsto  {L}^{\rm sym} \left(\frac{1}{p} \dsum_{\sigma \in \sym_p} p(\varphi(\sigma, X)) \cdot q(\varphi(\sigma, X)) \cdot g\right)
$$

For the sake of simplicity, we will use the following $\sym_p$-linear map for our computations: for each subset of variables $\{Y_1, \ldots, Y_p\}$ in $\{x(I^x(1)), \ldots, x(I^x(d))), \ldots, v(I^v(i,k)),\zeta(I^\zeta(i,k))\}$ we fix $L^{\rm sym}(Y_1)=y_v \in \R$. Since the mapping must be invariant under $\sym_p$ and there is always a permutation $\sigma \in \sym_p$ that maps $1$ in $\sigma(1) \in \{1, \ldots, p\}$, we know that $L^{\rm sym}(Y_j)=y_v$ for $j=1, \ldots, p$.

With the about settings, we define the symmetry-adapted moment matrices as follows:

\begin{cor}\label{thm:symmom}
For $r\in\N$, the $r$-th symmetry-adapted moment matrix $M_r^{\rm sym}(y)$ is
of the form
$$
M_r^{\rm sym}(y) \ = \ \bigoplus_{\lambda\vdash n} M_{r,\lambda}^{\rm sym}(y),
$$
\end{cor}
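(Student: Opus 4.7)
The plan is to derive the block decomposition from Schur's lemma applied to the $\sym_p$-invariant bilinear form $\mathcal{L}^{\rm sym}_{1}(\cdot,\cdot)$, together with the symmetry-adapted basis produced by Lemma~\ref{lem:bases}.

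First I would fix a symmetry-adapted basis $\mathcal{B} = \{b_{\lambda,j,\nu}\}$ of $\R[\Upsilon]_{\le r}$, indexed so that $\lambda \vdash p$ labels the isotypic component, $j = 1,\ldots,\eta_\lambda$ labels the copy of the irreducible Specht module $S^\lambda$ inside that component, and $\nu = 1,\ldots,\dim S^\lambda$ labels the position inside each copy. Such a basis exists by combining Lemma~\ref{lemma:specht} (which embeds $S^\lambda$ into $\R\{Y^\beta\}$ via generalized Specht polynomials), the decomposition $\R\{Y^\beta\} = \bigoplus_{\lambda \unrhd \mu^\beta} \bigoplus_{T \in \mathcal{T}_{\lambda,\mu}} \R\{S_{t_\lambda, T}\}$, Lemma~\ref{lemma:dec} (to lift the decomposition from one copy of $\R[Y_1,\ldots,Y_p]$ to the $N$-fold tensorial extension that governs $\varphi_\Upsilon$), and Lemma~\ref{lem:bases} (to multiply these pieces against the standard basis on the invariant coordinates $w,t,\theta$).

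Next, the moment matrix entries in this basis are $M_r^{\rm sym}(y)[b_{\lambda,j,\nu},\, b_{\lambda',j',\nu'}] = \mathcal{L}^{\rm sym}_{1}(b_{\lambda,j,\nu},\, b_{\lambda',j',\nu'})$. I would then invoke Schur's lemma together with the $\sym_p$-invariance of $\mathcal{L}^{\rm sym}$: because the bilinear form averages over the group, it is a $\sym_p$-invariant pairing on $\R[\Upsilon]_{\le r}$. Consequently the pairing between two irreducible submodules vanishes whenever they are non-isomorphic (killing cross-$\lambda$ blocks), and within a single isotypic component $\lambda$ the pairing acts as a scalar in the $\nu$-index (by Schur) times an $\eta_\lambda \times \eta_\lambda$ Gram-type matrix in the multiplicity index $j$. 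This yields exactly the block-diagonal structure
\[
M_r^{\rm sym}(y) \;=\; \bigoplus_{\lambda \vdash p} M_{r,\lambda}^{\rm sym}(y),
\]
where each block $M_{r,\lambda}^{\rm sym}(y)$ is (up to the $\dim S^\lambda$-fold repetition coming from the $\nu$ index) the Gram matrix of the multiplicities indexed by $j$.

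The bookkeeping step I expect to be the main obstacle is verifying that the bilinear form $\mathcal{L}^{\rm sym}_{1}$ is genuinely $\sym_p$-invariant as a form on $\R[\Upsilon]$ (not just that $L^{\rm sym}$ is invariant on polynomials), so that Schur's lemma applies cleanly, and then carefully tracking how the direct-sum decomposition of Lemma~\ref{lem:bases} (which arises from a \emph{tensor-product} style construction over the $N$ symmetric coordinate blocks together with the trivial action on $(w,t,\theta)$) regroups into isotypic components indexed by $\lambda \vdash p$. Once this identification is made, the statement of the corollary is immediate and requires no further calculation; the sizes of the blocks are determined by the multiplicities $\eta_\lambda$ of $S^\lambda$ in the symmetry-adapted basis of degree at most $r$.
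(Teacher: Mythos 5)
Your argument is correct and is essentially the proof the paper relies on: the corollary is stated without an explicit argument, as a direct consequence of the symmetry-adapted basis construction (Lemmas~\ref{lemma:dec}, \ref{lemma:specht}, \ref{lem:bases}) together with the general theory of \cite{lasserre-sym}, and that theory's proof is precisely your Schur's-lemma computation: the averaged, $\sym_p$-invariant form $\mathcal{L}^{\rm sym}_{1}$ annihilates pairings between non-isomorphic irreducible submodules and reduces, within each isotypic component, to a multiplicity Gram block repeated $\dim S^{\lambda}$ times. Note only that the blocks are indexed by partitions $\lambda\vdash p$, exactly as in your write-up; the ``$\lambda\vdash n$'' in the paper's display is a slip, since the acting group is $\sym_p$ on the facility labels.
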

where $M_{r,\lambda}^{\rm sym}(y)$ is the moment matrix constructed for symmetry-adapted basis for fixed $\lambda \vdash n$ as in the standard case (see Section \ref{sec:prelim}).

Finally, we define the symmetry-adapted relaxation
\begin{equation}\label{eq:lasserresym}
  Q^{\rm sym}_r:\quad\begin{array}{rcl}
   \multicolumn{3}{l}{\inf_{y} L^{\rm sym}(p) }\\
  M_r^{\rm sym}(y) & \succeq & 0 \, , \\
  M_{r - \lceil \deg g_j / 2 \rceil}^{\rm sym}(g_k \, y) & \succeq & 0 \, , \quad
k \le j \le n_{\K} \,\\
 M_{r - \lceil \deg h_j / 2 \rceil}^{\rm sym}(h_j \, y) & \succeq & 0 \, , \quad
1 \le j \le nc_1 \,
  \end{array}
\end{equation}
with optimal value denoted by $\inf Q^{\rm sym}_r$ (and $\min Q^{\rm sym}_r$ if the infimum
is attained).
\begin{remark}\label{re:size}
The symmetry-adapted setting defined above can give a significant reduction of the SDPs objects that need to be calculated. Indeed the number of variables involved equals the size of $\mathcal{B}_k$. Furthermore the symmetry-adapted moment matrix is block diagonal and the size of each block equals $\eta_i$.
\end{remark}

In this setting, Proposition
\ref{pr:lasserreconv} can be reformulated as follows.
\begin{theorem}\label{thm:symscheme}
Assume that the Archimedean Property \ref{defput} holds and let
$(Q^{\rm sym}_r)_{r\geq k_0}$ be the hierarchy of SDP-relaxations defined in
\eqref{eq:lasserresym}.
Then $(\inf Q^{\rm sym}_r)_{r\geq k_0}$ is a monotone non-decreasing sequence
that converges to $p^*$, i.e., $\inf Q^{\rm sym}_r\uparrow p^*$ as $r\to\infty$.
\end{theorem}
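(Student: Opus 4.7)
The plan is to deduce the symmetry-adapted convergence result from the non-symmetric convergence of the standard Lasserre hierarchy (Proposition \ref{pr:lasserreconv}) via a symmetrization argument. The key observation is that both the objective and constraints of \ref{pro:locgen2} are invariant under $\varphi_\Upsilon$ (as recorded in the lemma preceding Theorem \ref{thm:symscheme}), so the group action preserves feasibility and objective values. The Archimedean Property is inherited by the problem since adding the quadratic ball constraint is itself $\sym_p$-invariant, so Proposition \ref{pr:lasserreconv} applies to $Q_r$ and gives $\inf Q_r \uparrow p^*$.

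First I would establish monotonicity of $(\inf Q^{\rm sym}_r)_r$: the feasible set shrinks with $r$ because, just as in the standard hierarchy, higher-order moment and localizing matrix constraints entail the lower-order ones via principal submatrices (this inheritance carries over to each block $M^{\rm sym}_{r,\lambda}$ indexed by a partition $\lambda \vdash p$). The upper bound $\inf Q^{\rm sym}_r \leq p^*$ follows by evaluating $L^{\rm sym}$ at the Dirac measure concentrated at any optimal $(x^*,z^*,\dots,\theta^*)$ symmetrized over $\sym_p$; this produces an admissible moment sequence whose objective value equals $p^*$.

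The main step is to show $\inf Q^{\rm sym}_r = \inf Q_r$, which combined with Proposition \ref{pr:lasserreconv} yields the claim. Given any feasible $\mathbf y$ for $Q_r$, define its symmetrization $\bar{\mathbf y}$ by $L_{\bar{\mathbf y}}(f) := \tfrac{1}{p!}\sum_{\sigma \in \sym_p} L_{\mathbf y}(f \circ \varphi_\Upsilon(\sigma, \cdot))$. Because each $g_k$ and each $h_j$ is $\sym_p$-invariant (the blocks $F(0), F(j), F(p+\ell)$ are permuted among themselves by $\varphi_\Upsilon$ as noted in the sparsity discussion), averaging preserves all localizing constraints, and $M_r(\bar{\mathbf y}) \succeq 0$ since it is the average of conjugates $P_\sigma^\top M_r(\mathbf y) P_\sigma$ of a positive semidefinite matrix. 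Moreover the objective value is preserved because $\sum_\ell \lambda_\ell \theta_\ell$ is $\sym_p$-invariant. Thus $\bar{\mathbf y}$ is $\sym_p$-invariant and feasible with the same value, so it suffices to restrict $Q_r$ to $\sym_p$-invariant functionals.

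Now the bridge to $Q^{\rm sym}_r$ is provided by the representation-theoretic decomposition set up before Corollary \ref{thm:symmom}. In the symmetry-adapted basis described by Lemma \ref{lem:bases} and the generalized Specht construction, the conjugation action block-diagonalizes the moment matrix: a $\sym_p$-invariant $L_{\bar{\mathbf y}}$ vanishes on off-diagonal blocks between inequivalent irreducibles and acts as a scalar on the isotypic components, so $M_r(\bar{\mathbf y}) \succeq 0$ is equivalent to $\bigoplus_{\lambda \vdash p} M^{\rm sym}_{r,\lambda}(y) \succeq 0$, and the same holds for each localizing matrix. This is the standard consequence of Schur's lemma applied as in \cite{lasserre-sym}, and it gives the desired bijection between $\sym_p$-invariant feasible points of $Q_r$ and feasible points of $Q^{\rm sym}_r$, with matching objective values. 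Therefore $\inf Q^{\rm sym}_r = \inf Q_r \uparrow p^*$.

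The main obstacle is the last step: carefully verifying that the symmetry-adapted block decomposition of moment and localizing matrices is an exact equivalence, i.e., that positive semidefiniteness is preserved in both directions. This requires that the chosen symmetry-adapted basis $\mathcal B_k$ (built from generalized Specht polynomials indexed by semistandard tableaux of shape $\lambda$ dominating $\mu^\beta$) realizes the full isotypic decomposition of $\R[\Upsilon]_{\leq r}$, which is precisely the content of Lemma \ref{lemma:specht} and Lemma \ref{lem:bases}. Once this structural equivalence is in hand, convergence is immediate from Proposition \ref{pr:lasserreconv}.
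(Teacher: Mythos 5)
Your proof is correct and follows essentially the route the paper intends: the paper gives no explicit argument for this theorem, deferring to Proposition \ref{pr:lasserreconv} and the symmetry framework of \cite{lasserre-sym}, and your symmetrization-plus-Schur's-lemma argument identifying $\inf Q^{\rm sym}_r$ with $\inf Q_r$ restricted to $\sym_p$-invariant moment sequences is exactly the content of that reference. The only nitpick is that the individual polynomials $g_k$, $h_j$ are merely permuted by $\varphi_\Upsilon$ rather than fixed, as your own parenthetical already notes, and that permutation-invariance of the whole family is all the averaging step needs.
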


\begin{ex}
Let us consider a toy example for Problem \ref{pro:locgen2} with   $n=3$  demand points, in dimension $d=2$, $p=2$ facilities to be located and relaxation order $k=2$. The set of variables of our problem is

$
\Upsilon = (x_{11}, x_{21}, x_{12}, x_{22}, z_{11}, z_{12}, z_{21}, z_{22}, z_{31}, z_{32}, u_{11}, u_{12}, u_{21}, u_{22}, u_{31}, u_{32},
v_{111}, v_{121}, v_{112}, v_{122}, v_{211},$ $v_{221}, v_{212}, v_{222}, v_{311}, v_{321}, v_{312}, v_{322}, \zeta_{111},  \zeta_{121},  \zeta_{112},  \zeta_{122},  \zeta_{211}, \zeta_{221},  \zeta_{212},  \zeta_{222},  \zeta_{311},  \zeta_{321},  \zeta_{312},  \zeta_{322},  w_{11}, w_{12},$  $w_{13}, w_{21}, w_{22}, w_{23}, w_{31}, w_{32}, w_{33}, t_1, t_2, t_3, \theta_1, \theta_2, \theta_3)$

We apply the symmetry results to each of the subsets of variables where the facility appears, i.e.:

\begin{align}
\{x_{1k}, x_{2k}\}\; k=1,2, \tag{{\rm SUBSET $x_{\cdot k}$}}
\end{align}
\begin{align}
\{z_{i1}, z_{i2}\}\;i=1,2,3, \tag{{\rm SUBSET $z_{i\cdot}$}}
\end{align}
\begin{align}
\{u_{i1}, u_{i2}\}\; i=1,2,3, \tag{{\rm SUBSET $u_{i\cdot}$}}
\end{align}
\begin{align}
\{v_{i1k}, v_{i2k}\}\; i=1,2,3, k=1,2, \tag{{\rm SUBSET $v_{i\cdot k}$}}
\end{align}
\begin{align}
\{\zeta_{i1k}, \zeta_{i2k}\} ; i=1,2,3, k=1,2, \tag{{\rm SUBSET $\zeta_{i\cdot k}$}}
\end{align}

Let us construct the symmetry adapted basis for any of the above  subsets of variables, $\{Y_1, Y_2\}$. This construction will be applied to all the above $20$ sets of variables.

First, the components in the symmetry-adapted basis are indexed by the partitions $\lambda \vdash (2)$, thus $\lambda \in \{(2), (1,1)\}$. The $\beta$ to be taken into account are: $\beta \in \{(0,0), (1,0), (2,0), (1,1)\}$ with shapes $\mu$ equal to $(2)$, $(1,1)$, $(1,1)$ and $(2)$, respectively. Thus, the semistandard generalized  Young tableaux for each of these shapes and contents are:
\begin{itemize}
\item $\mu=(2)$:$\begin{ytableau}
1 & 1
\end{ytableau}$ and $\begin{ytableau}
1 & 2
\end{ytableau}$
\item $\mu=(1,1)$: No semistandard generalized  Young tableaux exists in this case.
\end{itemize}
Hence, there is only one irreducible component in this case (for $\mu=(2)$), so the symmetry-adapted basis in $\R[Y_1, Y_2]$ is
$$
\{1, Y_1+Y_2, Y_1^2+Y_2^2, Y_1Y_2\}
$$
We observe that the standard monomial basis for this set of two variables has $6$ monomials while this basis has only four elements.

Now, applying this shape to the 13 blocks of variables {\rm( {\rm SUBSET $x_{\cdot k}$} )-({\rm SUBSET $\zeta_{i\cdot k}$})} and adding the standard monomial basis to the remainder variables ($w$ and $t$) we get that a $392$-elements symmetry-adapted basis with degree up to $2$ for our problem.

The moment matrix is then a $392\times 392$-size matrix whose elements are computed by applying the sym-linearization operator $L^{\rm sym}$ to the two-by-two product of the elements in the basis. Analogously to compute the localizing matrices. We consider the standard sym-linear operator that maps the variables that are in the same orbit of our action $\varphi_{i,j,\ell}$ to the same element. For instance, when multiplying $(x_{11}+ x_{21})$ by itself we get $x_{11}^2+ x_{21}^2 + 2x_{11}x_{21}$. Since $x_{11}$ and $x_{21}$ are in the same orbit (there is a permutation sigma that maps $x_{11}$ into $x_{21}$), we get that $L^{\rm sym}((x_{11}+ x_{21})\cdot (x_{11}+ x_{21})) = 2 y_{2,0,0, ...} + 2y_{1,1, 0,...}$.

Observe that the standard monomial basis for this problem has $861$ elements (note that the number of variables is $40$ and all the possible standard monomials with these variables with degree up to $2$ is $\binom{40+2}{2}$), reducing considerably the sizes of the matrices for the SDP problem. This reduction is even more drastical when the number of demands points increases. Note that the standard monomial bases for each of the subsets of variables where the symmetry is applied have $6$ elements and it is reduced to $4$ by symmetry. Since this reduction is applied to thirteen sets of variables and then those elements are multiplied by the standard monomial bases for $w$ and $t$ (that depends of $n$, which is in general  the larger parameter of the location problem), we get that such a reduction will allow to solve larger instances of the problem. To  illustrate the advantage of this symmetry reduction,  in this special case, just by making some basic computations to count elements, the number of elements in the symmetry-adapted basis for 2-median planar problems with $n$ demand points is:
$$
\dfrac{n^4 + 16 n^3 +71 n^2 + 43 n + 68}{2}
$$
while the number of elements in the standard basis is:
$$
\dfrac{n^4 + 28 n^3 + 207 n^2 + 154 n + 30}{2}
$$
being the difference of elements $6n^3 + 68 n^2 + 43 n +7$. For instance, for $1000$ demand points, the number of variables appearing in the SDP problem is reduced by $6,068,043,007$ elements.

\end{ex}

\section{Conclusions}

We propose a novel, effective mixed integer nonlinear programming formulation for the continuous multifacility ordered median location problem for any $\ell_\tau$-norm and in any dimension. This formulation provides a unified approach for dealing with a broad family of multifacility location problems which up to now were usually solved  only for some special cases and usually in low dimension. We also show that the problem can be solved by using tools from the Theory of Moments for polynomial optimization,
by approximating the solution up to any desired degree of accuracy. Furthermore, such an algebraic framework allows us to prove new dimensionality reduction for the problem based on the sparsity and the symmetry of the formulation.

\end{document}